\newtheorem{theorem}{Theorem}[section]
\newtheorem{proposition}[theorem]{Proposition}
\newtheorem{lemma}[theorem]{Lemma}
\newtheorem{conjecture}[theorem]{Conjecture}
\theoremstyle{definition}
\newtheorem{definition}[theorem]{Definition}
\newtheorem{remark}[theorem]{Remark}
\theoremstyle{property}
\DeclareFontFamily{OT1}{rsfs}{}
\DeclareFontShape{OT1}{rsfs}{n}{it}{<-> rsfs10}{}
\DeclareMathAlphabet{\curly}{OT1}{rsfs}{n}{it}
\newcommand\LL{\mathbb L}
\renewcommand\O{\mathcal O}
\newcommand\PP{\mathbb P}
\newcommand\cE{\mathcal E}
\newcommand\F{\mathcal F}
\newcommand\C{\mathbb C}
\newcommand\sfZ{\mathsf Z}
\newcommand\Q{\mathbb Q}
\newcommand\Z{\mathbb Z}
\newcommand\Coh{\mathrm{Coh}}
\newcommand\tor{\mathrm{tor}}
\newcommand\SU{\mathrm{SU}}
\newcommand\vd{\mathrm{vd}}
\newcommand\vir{\mathrm{vir}}
\newcommand\VW{\mathrm{VW}}
\newcommand\td{\mathrm{td}}
\newcommand\rk{\operatorname{rk}}
\newcommand\tr{\operatorname{tr}}
\newcommand\ch{\operatorname{ch}}
\newcommand\Hom{\operatorname{Hom}}
\renewcommand\hom{\mathcal{H}{\it{om}}}
\newcommand\Ext{\operatorname{Ext}}
\newcommand\ext{\curly Ext}
\newcommand\Pic{\operatorname{Pic}}
\newcommand\Spec{\operatorname{Spec}\,}
\newcommand\Hilb{\operatorname{Hilb}}
\newcommand\Sym{\operatorname{Sym}}
\newcommand\mdot{{\scriptscriptstyle\bullet}}
\newcommand\INTO{\ar@{^{(}->}[r]}
\DeclareRobustCommand{\SkipTocEntry}[4]{}
\begin{document}
\title[Twisted sheaves and $\SU(r) / \Z_r$ Vafa-Witten theory]{Twisted sheaves and $\SU(r) / \Z_r$ Vafa-Witten theory}
\author[Y.~Jiang and M.~Kool]{Yunfeng Jiang and Martijn Kool}
\maketitle

\vspace{-1cm}

\begin{abstract}
The $\mathrm{SU}(r)$ Vafa-Witten partition function, which virtually counts Higgs pairs on a projective surface $S$, was mathematically defined by Tanaka-Thomas. On the Langlands dual side, the first-named author recently introduced virtual counts of Higgs pairs on $\mu_r$-gerbes. In this paper, we instead use Yoshioka's moduli spaces of twisted sheaves. Using Chern character twisted by rational $B$-field, we give a new mathematical definition of the $\SU(r) / \Z_r$ Vafa-Witten partition function when $r$ is prime. Our definition uses the period-index theorem of de Jong. 

$S$-duality, a concept from physics, predicts that the $\mathrm{SU}(r)$ and $\mathrm{SU}(r) / \Z_r$ partition functions are related by a modular transformation. We turn this into a mathematical conjecture, which we prove for all $K3$ surfaces and prime numbers $r$.
\end{abstract}

\section{Introduction} 

In physics, $S$-duality relates the partition function of $N=4$ supersymmetric Yang-Mills theory with gauge group $G$ on a 4-manifold $M$ to the partition function of the theory with Langlands dual gauge group $^{L}G$. In 1994, C.~Vafa and E.~Witten \cite{VW} considered a topological twist of this theory. Taking $G = \mathrm{SU}(r)$ and $M$ underlying a complex smooth projective surface $S$, they argued that $S$-duality gives rise to a modular transformation of the partition functions. More precisely, let $S$ be a complex smooth projective surface satisfying $H_1(S,\Z) = 0$ and fix a Chern class $c_1 \in H^2(S,\Z)$. Then
\begin{equation} \label{Sdual}
\mathsf{Z}^{\SU(r)}_{c_1}(-1/\tau) = (-1)^{(r-1)\chi(\O_S)} \Big( \frac{ r \tau}{i} \Big)^{-\frac{e(S)}{2}} \mathsf{Z}^{^{L}\SU(r)}_{c_1}(\tau),
\end{equation}
where $e(S)$ denotes the topological Euler characteristic of $S$ and $^{L}\SU(r) = \SU(r) / \Z_r$. For surfaces with $H_1(S,\Z) \neq 0$, some modifications are needed \cite{VW,Wit}.

Vafa-Witten argued that these partition functions are related to generating functions of Euler characteristics of instanton moduli spaces, summing over second Chern class with formal parameter $q = \exp(2 \pi i \tau)$, where $\tau$ is the modular parameter taking values in the upper half plane $\mathfrak{H}$. However, the ``instanton moduli spaces'' required for general surfaces are very singular and the precise algebro-geometric foundations of the theory, on the $\SU(r)$ side, were only recently formulated by Y.~Tanaka and R.P.~Thomas \cite{TT1, TT2}. See also \cite{GSY} for a Donaldson-Thomas perspective.

\subsection*{$\SU(r)$ partition function}  Let $r \in \Z_{>0}$, $c_1 \in H^2(S,\Z)$ algebraic, and $n \in H^4(S,\Z) \cong \Z$. Take a line bundle $L$ on $S$ with $c_1(L) = c_1$.
On the $\SU(r)$ side, the relevant moduli space is
$$
N_S^H(r,L,n) := \big\{ [(E,\phi)] \, : \, \rk(E) = r, \, \det E \cong L, \, c_2(E) = n, \, \tr \phi = 0 \big\},
$$
where $(E,\phi)$ is a stable Higgs pair (with respect to a fixed polarization $H$ on $S$) and $[(E,\phi)]$ denotes its isomorphism class. More precisely, $E$ is a torsion free sheaf and $\phi : E \rightarrow E \otimes K_S$ is a morphism satisfying a Gieseker stability condition for $\phi$-invariant subsheaves of $E$. The gauge group $\SU(r)$ is incorporated by the requirements $\det(E)\cong L$ and $\tr \phi = 0$. Suppose, for the moment, $r$ and $c_1$ are chosen such that there are no rank $r$ strictly semistable Higgs pairs $(E,\phi)$ on $S$ with $c_1(E)=c_1$. Tanaka-Thomas showed that $N_S^H(r,L,n)$ admits a symmetric perfect obstruction theory in the sense of \cite{Beh}. 

In this paper, we only consider surfaces with $H_1(S,\Z) = 0$. Therefore $N_S^H(r,L,n) \cong N_S^H(r,c_1,n)$, where
$$
N_S^H(r,c_1,n) := \big\{[(E,\phi)] \, : \, \rk(E) = r, \, c_1(E) = c_1, \, c_2(E) = n, \, \tr \phi = 0 \big\}
$$ 
and, as before, $[(E,\phi)]$ denotes the isomorphism class of a stable Higgs pair $(E,\phi)$. We will henceforth work with $N_S^H(r,c_1,n)$. Scaling the Higgs field gives a $\C^*$-action on the non-compact moduli space $N_S^H(r,c_1,n)$. However, the fixed locus $N_S^H(r,c_1,n)^{\C^*}$ is projective and Tanaka-Thomas define invariants by the virtual localization formula \cite{GP}
\begin{equation} \label{virloc}
\int_{[N_S^H(r,c_1,n)^{\C^*}]^{\vir}} \frac{1}{e(N^{\vir})} \in \Q,
\end{equation}
where $e$ denotes $\C^*$-equivariant Euler class and $N^{\vir}$ is the virtual normal bundle.\footnote{The equivariant parameter drops out because of symmetry.} The fixed locus $N_S^H(r,c_1,n)^{\C^*}$ contains the Gieseker-Maruyama moduli space $M_S^H(r,c_1,n)$ of rank $r$ stable torsion free sheaves on $S$ with Chern classes $c_1,n$ as an open and closed subscheme. In general, it contains other components sometimes referred to as vertical or monopole components. The virtual dimension of $M_S^H(r,c_1,n)$ equals
\begin{equation} \label{vd}
\vd(r,c_1,n) := 2rn - (r-1)c_1^2 - (r^2-1)\chi(\O_S).
\end{equation}
The $\C^*$-induced perfect obstruction theory on $M_S^H(r,c_1,n)$ has virtual tangent bundle given (point-wise) by $R\Hom_S(E,E)_0[1]$, where $(\cdot)_0$ denotes trace-free part, and was studied by T.~Mochizuki \cite{Moc}. The contribution of $M_S^H(r,c_1,n)$ to \eqref{virloc} equals
$$
(-1)^{\vd(r,c_1,n)} e^{\vir}(M_S^H(r,c_1,n)),
$$
where $e^{\vir}$ denotes the virtual Euler characteristic introduced by Ciocan-Fontanine--Kapranov \cite{CFK} and Fantechi-G\"ottsche \cite{FG}. When $H \cdot K_S < 0$, there are no vertical components, $M_S^H(r,c_1,n)$ is smooth of expected dimension, and its virtual and topological Euler characteristics coincide. In this case, the study of these Euler characteristics has a rich ongoing history (an incomplete selection of literature: \cite{Kly, Got1, Got2, Got3, LQ1, LQ2, Man, Moz, Wei, Yos1, Yos2, Koo} and references therein). 

The mathematical definition of the $\SU(r)$ Vafa-Witten partition function is
$$
\mathsf{Z}^{\SU(r)}_{c_1}(q) := r^{-1} q^{-\frac{1}{2r} \chi(\O_S) + \frac{r}{24} K_S^2} \sum_{n \in \Z} q^{\frac{1}{2r} \vd(r,c_1,n)}(-1)^{\vd(r,c_1,n)}  \int_{[N_S^H(r,c_1,n)^{\C^*}]^{\vir}} \frac{1}{e(N^{\vir})},
$$
where $\vd(r,c_1,n)$ is given by \eqref{vd} and the normalization factor $q^{\bullet}$ is crucial to make modularity work \cite{VW}. We assumed there are no strictly semistable objects. This assumption can be removed by using Joyce-Song pairs \cite{TT2} as we review in Section \ref{sec:JS}. 

A closed expression for the $\SU(2)$ partition function for surfaces with smooth canonical curve was conjectured in \cite{VW} and extended to arbitrary surfaces satisfying $p_g(S)>0$ in \cite{DPS}. This conjectural formula has been verified, up to some order in $q$, in many cases \cite{GK1, Laa1}. For the $\SU(3)$ and $\SU(5)$ partition function, conjectural closed expressions are also known and tested \cite{GK3, Laa1, GKL}. Each of these partition functions is invariant under replacing $c_1$ by $c_1 + r \gamma$ for any algebraic class $\gamma \in H^2(S,\Z)$. 

For $r$ and $c_1$ such that Gieseker and $\mu$-stability coincide (e.g.~when $\gcd(r,H\cdot c_1) = 1$), this is obvious since $- \otimes \O_S(\gamma)$ induces an isomorphism on moduli spaces. 
\begin{conjecture} \label{conj1}
For any smooth polarized surface $(S,H)$ and algebraic classes $c_1, \gamma \in H^2(S,\Z)$, we have $\mathsf{Z}^{\SU(r)}_{c_1}(q) = \mathsf{Z}^{\SU(r)}_{c_1 + r \gamma}(q)$.
\end{conjecture}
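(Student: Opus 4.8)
As this is stated as a conjecture, I describe the approach I would pursue. Begin with bookkeeping: tensoring a trace-free Higgs pair $(E,\phi)$ by $\O_S(\gamma)$ gives a trace-free Higgs pair $(E\otimes\O_S(\gamma),\phi)$ with $c_1\mapsto c_1+r\gamma$ and $n=c_2(E)\mapsto n':=n+(r-1)(c_1\cdot\gamma)+\binom{r}{2}\gamma^2$, as one reads off from $\ch(E\otimes\O_S(\gamma))=\ch(E)\,e^{\gamma}$. A direct computation from \eqref{vd} then gives $\vd(r,c_1+r\gamma,n')=\vd(r,c_1,n)$, so under the bijection $n\leftrightarrow n'$ the powers of $q$ and the signs $(-1)^{\vd}$ in the definition of $\mathsf{Z}^{\SU(r)}_{c_1}$ match, while the prefactor $r^{-1}q^{-\frac1{2r}\chi(\O_S)+\frac r{24}K_S^2}$ does not involve $c_1$. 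Since $n\mapsto\vd(r,c_1,n)=2rn+\mathrm{const}$ is injective, the asserted equality of series is \emph{equivalent} to the termwise identity
\[
\int_{[N_S^H(r,c_1,n)^{\C^*}]^{\vir}}\frac1{e(N^{\vir})}=\int_{[N_S^H(r,c_1+r\gamma,n')^{\C^*}]^{\vir}}\frac1{e(N^{\vir})}\qquad(\forall\,n),
\]
with the Joyce-Song-pair definition of Section~\ref{sec:JS} understood on both sides whenever strictly semistable objects occur.

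When Gieseker and slope stability coincide for the numerical types in play — e.g.\ if $\gcd(r,H\cdot c_1)=1$, a condition preserved by $c_1\rightsquigarrow c_1+r\gamma$ because $H\cdot(c_1+r\gamma)\equiv H\cdot c_1\ (\mathrm{mod}\ r)$ — the map $-\otimes\O_S(\gamma)$ is a $\C^*$-equivariant isomorphism $N_S^H(r,c_1,n)\xrightarrow{\ \sim\ }N_S^H(r,c_1+r\gamma,n')$. Since $R\Hom_S(E\otimes\O_S(\gamma),E\otimes\O_S(\gamma))_0\cong R\Hom_S(E,E)_0$ and $\phi$ is untouched, it carries one symmetric perfect obstruction theory to the other, matching fixed loci, virtual classes and virtual normal bundles, and the two integrals agree term by term. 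This is the ``obvious'' case noted in the text.

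In general, $-\otimes\O_S(\gamma)$ is still an isomorphism of the \emph{full} moduli stacks of Higgs pairs, commuting with the $\C^*$-action and intertwining obstruction theories, but it fails to preserve Gieseker stability: it identifies $H$-Gieseker stability for $(r,c_1+r\gamma,n')$ with $\gamma$-twisted $H$-Gieseker stability for $(r,c_1,n)$, i.e.\ Gieseker stability computed with Chern character twisted by the $B$-field $\gamma$, the formalism central to this paper. The statement thus reduces to: the Joyce-Song-defined Vafa-Witten invariant of $(r,c_1,n)$ is unchanged as the $B$-field moves from $0$ to $\gamma$. I would establish this by sliding $B=t\gamma$, $t\in[0,1]$: the twisted-Gieseker walls are locally finite, within each chamber the moduli problem and hence the invariant is constant, and across each of the finitely many walls in $[0,1]$ one applies the Joyce(--Song)/Mochizuki wall-crossing formula adapted to Higgs pairs together with the universal inversion relating the pair counts to $\mathsf{Z}^{\SU(r)}_{c_1}$ (Section~\ref{sec:JS}), the aim being to show the wall contributions cancel after inversion so that each coefficient $\int_{[N^{\C^*}]^{\vir}}1/e(N^{\vir})$ is constant in $t$.

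That cancellation is the main obstacle, and the reason the statement remains conjectural. It requires understanding the $\C^*$-fixed loci along a twisted-Gieseker wall — not only the Gieseker-Maruyama components but also the vertical/monopole components, which have no counterpart in the purely sheaf-theoretic wall-crossing literature — together with the degeneration of their virtual classes, and then checking that the combinatorics of the wall-crossing formula, passed through the inversion formula and weighted by the $q$-grading, leaves every coefficient fixed. A complete argument along these lines would in particular yield a form of polarization-invariance for the twisted theory; lacking it, one is left with the coprime case above and the closed-form verifications cited in the text.
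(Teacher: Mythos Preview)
The paper does not prove this statement; it is stated as an open conjecture, with only the one-line remark that for $\gcd(r,H\cdot c_1)=1$ the map $-\otimes\O_S(\gamma)$ gives an isomorphism of moduli spaces. Your proposal correctly recognizes this status, reproduces the paper's coprime argument accurately (including the bookkeeping $\vd(r,c_1+r\gamma,n')=\vd(r,c_1,n)$, which the paper uses implicitly elsewhere in Proposition~\ref{indep}), and is honest that the wall-crossing approach in the general case is a strategy rather than a proof.

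Your outline for the general case --- interpreting the discrepancy as a change of $B$-field $t\gamma$ and attempting to show the Joyce--Song invariants are wall-crossing-invariant along that path --- is a reasonable line of attack and goes further than the paper, which does not sketch any approach. The paper only records that the conjecture holds for $K3$ surfaces (a consequence of the explicit formula \eqref{SUK3}) and that on vertical components of eigenrank $(1,\ldots,1)$ independence is known by \cite{Laa1,Laa2}. Your identification of the monopole components as the chief obstruction is apt: this is precisely where existing wall-crossing technology for sheaves does not directly apply, and where the conjecture remains genuinely open.
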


\subsection*{$\SU(r) / \Z_r$ partition function} In order to mathematically understand the $S$-duality transformation \eqref{Sdual}, we need to define the right-hand side. In the physics literature \cite{VW, LL}, one can find the following formula
\begin{equation} \label{generalform}
\mathsf{Z}^{\SU(r) / \Z_r}_{c_1}(q) := \sum_{w \in H^2(S,\mu_r)} e^{\frac{2 \pi i}{r}(w \cdot c_1)} \mathsf{Z}_{w}(q),
\end{equation}
where $\mu_r$ is the cyclic group of order $r$ and $\mathsf{Z}_{w}(q)$ should be a generating function of Euler characteristics of moduli spaces of $\mathrm{PGL}(r)$-bundles in class $w$. 
The algebro-geometric compactification of the moduli space of $\mathrm{PGL}(r)$-bundles is the moduli space of twisted sheaves studied in \cite{Lie1, Yos4}. 
The group $H^2(S,\mu_r)$ classifies $\mu_r$-gerbes. In \cite{Jia1}, the first-named author developed Vafa-Witten theory of $\mu_r$-gerbes (see also \cite{Jia2, Jia3, JK, JTs}). 
In this paper, which is inspired by \cite{Jia1}, we take a different approach:
\begin{itemize}
\item Instead of $\mu_r$-gerbes, we use K.~Yoshioka's moduli spaces of twisted sheaves via Brauer-Severi varieties \cite{Yos4}.
\item We define the $\SU(r)/\Z_r$ partition function by appropriately summing over ``Chern character twisted by rational $B$-field'', a notion introduced by D.~Huybrechts and P.~Stellari \cite{HSt}. We sum differently over the Chern data  compared to \cite{Jia1} and in a way which works for arbitrary surfaces.
\end{itemize}

We give a mathematical definition of $\mathsf{Z}_{w}(q)$ for any $w \in H^2(S,\mu_r)$ and then define $\mathsf{Z}_{c_1}^{\SU(r) / \Z_r}(q)$ by \eqref{generalform}. 
We make the following assumption, which leads to striking simplifications and allows us to take various short-cuts.\footnote{On the $\SU(r)$ side, for $r$ prime, Thomas showed that cosection localization \cite[Cor.~5.30]{Tho} leads to surprising simplifications, whereas the non-prime case is currently computationally mostly out of reach.} \\

\noindent \textbf{Assumption.} Let $r$ be a prime number. \\

Consider the short exact sequences
\begin{align} 
\begin{split} \label{ses}
1 &\rightarrow \O_S^* \rightarrow \mathrm{GL}(r) \rightarrow \mathrm{PGL}(r) \rightarrow 1, \\
1 &\rightarrow \mu_r \rightarrow \mathrm{SL}(r) \rightarrow \mathrm{PGL}(r) \rightarrow 1.
\end{split}
\end{align}
From these sequences and the inclusion $\mu_r \leq \O_S^*$, we obtain (commuting) maps 
\begin{align*}
H^1(S,\mathrm{PGL}(r)) \stackrel{\delta}{\rightarrow} H^2(S,\O_S^*), \quad H^1(S,\mathrm{PGL}(r)) \stackrel{\delta'}{\rightarrow} H^2(S,\mu_r), \quad H^2(S,\mu_r) \stackrel{o}{\rightarrow} H^2(S,\O_S^*).
\end{align*}
Fix a class $w \in H^2(S,\mu_r)$. An important consequence of the period-index theorem of A.J.~de Jong is that $\delta'$ is \emph{surjective} \cite[Cor.~4.2.2.4]{Lie2}, \cite{dJo}. 
For $S$ a $K3$ surface, surjectivity of $\delta'$ was proved in \cite{HSc}. Hence we can pick a $\PP^{r-1}$-bundle
$
p : Y \rightarrow S
$
such that $w(Y) := \delta'([Y]) = w$. Our construction will be independent of the choice of $p : Y \rightarrow S$ satisfying $w(Y) = w$. Applying $o$, we obtain a Brauer class
$$
\alpha := o(w) = \delta([Y]) \in H^2(S,\O_S^*)_{\tor},
$$
where $H^2(S,\O_S^*)_{\tor}$ denotes the torsion part of $H^2(S,\O_S^*)$, which is isomorphic to the Brauer group $\mathrm{Br}(S)$ \cite{Gro}.

Let $\Coh(S,\alpha)$ be the category of $\alpha$-twisted sheaves on $S$ \cite{Cal}. Using an analytic trivialization $\{U_i\}_i$ of $p : Y \rightarrow S$, locally $Y_i:=p^{-1}(U_i) \cong \PP^{r-1} \times U_i$ has a tautological line bundle $\O_{Y_i}(\lambda_i)$. These line bundles glue to a $p^* \alpha^{-1}$-twisted line bundle  $L$ on $Y$. Therefore, given any $\alpha$-twisted sheaf $E$ on $S$, we can ``untwist'' it on $Y$ by $p^*E \otimes L$. Coherent sheaves obtained in this way are called $Y$-sheaves \cite{Yos4} and we denote their category by $\Coh(S,Y) \subset \Coh(Y)$. This gives an equivalence between $\Coh(S,Y)$ and $ \Coh(S,\alpha)$,
which extends to an equivalence between the following categories (Section \ref{sec:cat}):
\begin{itemize}
\item $\mathrm{Higgs}_{p^* K_S}(S,Y)$ containing pairs $(E,\phi)$ with $E$ a $Y$-sheaf and $\phi : E \rightarrow E \otimes p^* K_S$. 
\item $\mathrm{Higgs}_{K_S}(S,\alpha)$ consisting of $\alpha$-twisted Higgs pairs on $S$.
\end{itemize}

\subsection*{Trivial Brauer class} When $\alpha = 0$, $\mathrm{Higgs}_{K_S}(S,\alpha)$ is the (untwisted) category of Higgs pairs considered by Tanaka-Thomas \cite{TT1}. By \eqref{ses}, $\alpha = 0$ implies $Y$ is the projectivization of a vector bundle on $S$. Hence there exists an \emph{algebraic} class $\xi \in H^2(S,\Z)$ such that $[\xi] = w$. Virtual counts of (untwisted) Higgs pairs on $S$ were already carried out by Tanaka-Thomas. When there exist no rank $r$ strictly semistable Higgs pairs $(E,\phi)$ on $S$ with $c_1(E)=\xi$, we define as before
\begin{equation} \label{trivialBr}
\mathsf{Z}_{w}(q) := q^{-\frac{1}{2r} \chi(\O_S) + \frac{r}{24} K_S^2} \sum_{n \in \Z} q^{\frac{1}{2r} \vd(r,\xi,n)} (-1)^{\vd(r,\xi,n)} \int_{[N_S^H(r,\xi,n)^{\C^*}]^{\vir}} \frac{1}{e(N^{\vir})},
\end{equation}
where $\vd(r,\xi,n)$ is given by \eqref{vd}. When there are strictly semistable objects, the corresponding invariants are defined via Joyce-Song pairs as discussed in Section \ref{sec:JS}. Conjecture \ref{conj1} implies that $\mathsf{Z}_{w}(q)$ is independent of the choice of algebraic $\xi \in H^2(S,\Z)$ such that $w=[\xi]$. As noted above, this is clear when $\gcd(r, H \cdot w) = 1$.

\subsection*{Non-trivial Brauer class} Suppose $\alpha \neq 0$ and let $\xi \in H^2(S,\Z)$ such that $[\xi] = w$. Then $\xi$ is \emph{non-algebraic}. Let $G$ be the rank $r$ locally free $Y$-sheaf corresponding to the (unique up to scale) extension of $T_{Y/S}$ by $\O_Y$ \cite{Yos4}. We consider the moduli space $M_{Y,\xi/r}^H(r,\xi,n)$ of ($G$-)twisted stable $Y$-sheaves $E$ satisfying
\begin{equation} \label{twistedch}
e^{\frac{\xi}{r}} \ch_G(E) := e^{\frac{\xi}{r}} \frac{\ch(Rp_*(E \otimes G^\vee))}{\sqrt{\ch(Rp_*( G \otimes G^\vee))}} = (r, \xi , \tfrac{1}{2} \xi^2 - n) \in H^*(S,\Q).
\end{equation}
Fixing $e^{\xi/r} \ch_G(E)$ of the $Y$-sheaf $E$ corresponds to fixing Chern character twisted by rational $B$-field $\xi/r$ of the $\alpha$-twisted sheaf $p_*(E \otimes L^\vee)$ \cite{HSt}. Based on an argument of Yoshioka in the $K3$ case, we observe that $n \in \Z$ (Proposition \ref{integrality}) , which is important in our calculations. We also consider the moduli space $N_{Y,\xi/r}^H(r,\xi,n)$ of ($G$-)twisted stable $Y$-Higgs sheaves $(E,\phi)$ with $\tr \phi = 0$ and $E$ satisfying \eqref{twistedch}. Scaling the Higgs field $\phi : E \rightarrow E \otimes p^* K_S$ gives a $\C^*$-action on $N_{Y,\xi/r}^H(r,\xi,n)$.

The $K$-group of $Y$-sheaves is of the form 
$$K(S,Y) = \Z E_0 \oplus K(S,Y)_{\leq 1},$$
where $E_0$ is a locally free $Y$-sheaf of minimal positive rank and $K(S,Y)_{\leq 1}$ is the subgroup generated by $Y$-sheaves $E$ of dimension $\leq 1$ (meaning $\dim p_*E \leq 1$). 
Since $r$ is prime and $\alpha \neq 0$, we have $\rk(E_0)=r$ (Theorem \ref{conethm}).
Hence $M_{Y,\xi/r}^H(r,\xi,n)$ is projective, it contains all isomorphism classes of torsion free $Y$-sheaves $E$ satisfying \eqref{twistedch}, and $N_{Y,\xi/r}^H(r,\xi,n)$ is a \emph{cone} over $M_{Y,\xi/r}^H(r,\xi,n)$ (Theorem \ref{conethm}). So we can drop the $H$-dependence and 
\begin{equation*} 
N_{Y,\xi/r}(r,\xi,n)^{\C^*} = M_{Y,\xi/r}(r,\xi,n).
\end{equation*}
The moduli space  $M_{Y,\xi/r}(r,\xi,n)$ has a natural perfect obstruction theory, which has virtual tangent bundle (point-wise) given by $R\Hom_Y(E,E)_0[1]$ (Proposition \ref{potM}). Its virtual dimension $\vd(r,\xi,n)$ is given by \eqref{vd}. Since $N_{Y,\xi/r}(r,\xi,n)$ is a cone over $M_{Y,\xi/r}(r,\xi,n)$, it carries an induced symmetric perfect obstruction theory by \cite{JTh}. We define
\begin{align*}
\mathsf{Z}_{w}(q) :=&\, q^{-\frac{1}{2r} \chi(\O_S) + \frac{r}{24} K_S^2} \sum_{n \in \Z} q^{\frac{1}{2r} \vd(r,\xi,n)} (-1)^{\vd(r,\xi,n)} \int_{[N_{Y,\xi/r}(r,\xi,n)^{\C^*}]^{\vir}} \frac{1}{e(N^{\vir})} \\
 =&\, q^{-\frac{1}{2r} \chi(\O_S) + \frac{r}{24} K_S^2} \sum_{n \in \Z} q^{\frac{1}{2r} \vd(r,\xi,n)} e^{\vir}(M_{Y,\xi/r}(r,\xi,n)).
\end{align*}
The second equality, and independence of choice of $[p : Y \rightarrow S] \in H^1(S,\mathrm{PGL}(r))$ such that $w(Y)=w$ and $\xi \in H^2(S,\Z)$ such that $[\xi] = w$, are shown in Proposition \ref{indep}.  It also follows that $\mathsf{Z}_{w}(q)$ is independent of $H$.

We have defined $\mathsf{Z}_{w}(q)$ for all $w \in H^2(S,\mu_r)$ and therefore the $\SU(r) / \Z_r$ Vafa-Witten partition function by \eqref{generalform}. We can now state Vafa-Witten's $S$-duality transformation as a mathematical conjecture.

\begin{conjecture} \label{Sdualconj}
Let $(S,H)$ be a smooth polarized  surface with $H_1(S,\Z) = 0$ and $p_g(S)>0$. Let $r$ be prime and $c_1 \in H^2(S,\Z)$ algebraic. Then $\sfZ_{c_1}^{\SU(r)}(q)$ and $\sfZ_{c_1}^{\SU(r) / \Z_r}(q)$ are Fourier expansions in $q = \exp(2 \pi i \tau)$ of meromorphic functions $\sfZ_{c_1}^{\SU(r)}(\tau)$ and $\sfZ_{c_1}^{\SU(r) / \Z_r}(\tau)$ on $\mathfrak{H}$ satisfying
\begin{equation*} 
\mathsf{Z}^{\SU(r)}_{c_1}(-1/\tau) = (-1)^{(r-1)\chi(\O_S)} \Big( \frac{ r \tau}{i} \Big)^{-\frac{e(S)}{2}} \mathsf{Z}^{\SU(r) / \Z_r}_{c_1}(\tau).
\end{equation*}
\end{conjecture}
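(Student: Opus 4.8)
The plan is to prove Conjecture \ref{Sdualconj} for $S$ a $K3$ surface, the case in which both sides of the conjectured identity become completely explicit. For such $S$ we have $K_S=0$, $\chi(\O_S)=2$, $e(S)=24$ and $p_g(S)=1$, so the prefactor $q^{-\frac1{2r}\chi(\O_S)+\frac r{24}K_S^2}$ equals $q^{-1/r}$, the sign $(-1)^{(r-1)\chi(\O_S)}$ equals $1$, and $\bigl(\tfrac{r\tau}{i}\bigr)^{-e(S)/2}=(r\tau)^{-12}$. Moreover $K_S=0$ forces the Higgs field to vanish on every stable (hence simple) object, so $N_S^H(r,c_1,n)^{\C^*}=M_S^H(r,c_1,n)$ and, by Theorem \ref{conethm}, $N_{Y,\xi/r}(r,\xi,n)^{\C^*}=M_{Y,\xi/r}(r,\xi,n)$, with no vertical components. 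The first step is then to make all these moduli spaces explicit: whenever the relevant (twisted) Mukai vector is primitive and $H$ generic with respect to it, the moduli space is a smooth projective holomorphic symplectic variety deformation equivalent to $\Hilb^{d}(K3)$ with $2d$ its dimension --- classically in the untwisted case, and by Yoshioka's theorem \cite{Yos4} in the twisted case, via the identification of $Y$-sheaves with $\alpha$-twisted sheaves on $S$. Hence $e^{\vir}$ of such a space equals, up to a universal sign, its topological Euler characteristic, which by G\"ottsche's formula $\sum_{d\ge0}e(\Hilb^d(K3))\,q^d=\prod_{m\ge1}(1-q^m)^{-24}$ depends only on the virtual dimension \eqref{vd}. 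Since a rank-$r$ twisted $Y$-sheaf with $w(Y)=w\neq0$ has automatically primitive Mukai vector ($r$ prime and $r\nmid\xi$ for any lift $\xi$ of $w$), this covers all the twisted moduli spaces; the only non-smooth case is $M_S^H(r,c_1,n)$ with $v$ imprimitive, which occurs exactly when $c_1\in rH^2(S,\Z)$, i.e.\ on the $\SU(r)/\Z_r$ side only through the single term $w=0$, where $\mathsf{Z}_0=\mathsf{Z}^{\SU(r)}_{0}$.

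Second, substituting the G\"ottsche numbers, the normalisation $q^{-1/r}$ and \eqref{vd}, and resumming over $n\in\Z$ (an integer on the twisted side by Proposition \ref{integrality}), turns $\mathsf{Z}^{\SU(r)}_{c_1}(\tau)$ and each $\mathsf{Z}_w(\tau)$ into explicit weakly holomorphic modular forms of weight $-12$, expressible through Dedekind eta functions $\eta(d\tau)^{-24}$ and their translates $\eta\bigl(\tfrac{\tau+\ell}{d}\bigr)^{-24}$ with $d\mid r^2$; these span a finite-dimensional space carrying an $SL_2(\Z)$-action under which $\tau\mapsto-1/\tau$ permutes a distinguished basis up to automorphy factors read off from $\eta(-1/\tau)^{24}=\tau^{12}\eta(\tau)^{24}$. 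The coefficients are elementary: for $\mathsf{Z}_w$ they are roots of unity determined by the residue of $\xi^2$ modulo $2r$ --- hence by $w$ alone, since replacing the lift $\xi$ of $w$ by $\xi+r\gamma$ leaves that residue unchanged --- times powers of $r$; the imprimitive contributions to $\mathsf{Z}_0$ are computed through the Joyce--Song pairs of Section \ref{sec:JS}, which for prime $r$ reduce to a rank-one count and hence are again governed by $\Hilb^\bullet(K3)$.

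Third, I would evaluate the twist sum in \eqref{generalform} and match the two sides. Since $H^1(K3,\Z)=H^3(K3,\Z)=0$ one has $H^2(S,\mu_r)\cong H^2(S,\Z)/r$, carrying the mod-$r$ reduction of the even unimodular intersection form. Inserting the formulas for $\mathsf{Z}_w$ then produces, coefficient by coefficient in the eta-basis, a Gauss sum $\sum_{w\in H^2(S,\Z)/r}\exp\!\bigl(\tfrac{2\pi i}{r}(w\cdot c_1-\tfrac{\ell}{2}w^2)\bigr)$: completing the square converts the linear term $w\cdot c_1$ into a shift of the root-of-unity coefficient by $\ell^{-1}c_1^2/2$, which is exactly the shift produced on the $\SU(r)$ side by the permutation that $\tau\mapsto-1/\tau$ induces, while the residual quadratic Gauss sum evaluates by classical Gauss-sum reciprocity / Milgram's formula to $r^{b_2(S)/2}=r^{11}$ times a universal eighth root of unity that is trivial here because the $K3$ lattice has signature $-16\equiv0\pmod8$. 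The degenerate term ($\ell$ non-invertible, in particular $\ell=0$) contributes $r^{b_2(S)}\cdot\bfone_{c_1\in rH^2(S,\Z)}$, which supplies precisely the weight needed to absorb the imprimitive $w=0$ term once the modular transformation interchanges the two ``cusps''. Assembling all this, applying $\tau\mapsto-1/\tau$ to the explicit expression for $\mathsf{Z}^{\SU(r)}_{c_1}$ and using $(-1)^{(r-1)\chi(\O_S)}=1$ and $\bigl(\tfrac{r\tau}{i}\bigr)^{-12}=(r\tau)^{-12}$, a term-by-term comparison with $\bigl(\tfrac{r\tau}{i}\bigr)^{-e(S)/2}\mathsf{Z}^{\SU(r)/\Z_r}_{c_1}(\tau)$ gives the identity.

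The hard part will be the imprimitive case: computing the Vafa--Witten invariants of strictly semistable rank-$r$ sheaves on $K3$ through the Joyce--Song formalism, where primality of $r$ is genuinely used --- the only imprimitive decomposition of the Mukai vector is $r$ times a primitive rank-one class, so the wall-crossing contribution collapses to a single divisor and is once more governed by Hilbert schemes of points. Two further delicate points: the bookkeeping of the normalisation prefactors, of the signs $(-1)^{\vd}$ versus the sign implicit in $e^{\vir}$ of a holomorphic symplectic manifold, and of the powers of $r$ must be exact, since a single misplaced sign or factor of $r$ ruins the modular identity, and it is cleanest to treat $r=2$ --- where the mod-$2$ Gauss sum needs its own $2$-adic analysis, into which the Arf invariant of the $K3$ lattice modulo $2$ (equal to $0$) enters --- separately from odd $r$; and one needs Yoshioka's deformation-equivalence of twisted moduli spaces to Hilbert schemes uniformly for all $w\neq0$ and all $n$ with $\vd(r,\xi,n)\ge0$, including the boundary case $\vd=0$ of a rigid object.
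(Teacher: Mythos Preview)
Your proposal is correct and follows essentially the same route as the paper: reduce both sides to explicit expressions in $\Delta(q^r)^{-1}$ and $\Delta(e^{2\pi i j/r}q^{1/r})^{-1}$ via Yoshioka's deformation equivalence of (twisted) moduli to Hilbert schemes and G\"ottsche's formula, then match them using the quadratic lattice sums over $H^2(S,\mu_r)$ (what the paper calls ``flux sums'') together with the modular transformation of $\Delta$.

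Two small corrections. First, the sentence ``$K_S=0$ forces the Higgs field to vanish on every stable (hence simple) object, so $N_S^H(r,c_1,n)^{\C^*}=M_S^H(r,c_1,n)$'' is not a valid argument: a stable Higgs pair $(E,\phi)$ need not have $E$ stable, so simplicity of $E$ is not available, and the $\C^*$-fixed locus can genuinely have vertical components on $K3$. The paper instead invokes Thomas's cosection localization \cite[Thm.~5.34]{Tho} to show those components contribute zero. Second, by the definitions one has $\mathsf{Z}_0 = r\cdot\mathsf{Z}^{\SU(r)}_0$, not $\mathsf{Z}^{\SU(r)}_0$; this is exactly the sort of stray power of $r$ you warn yourself about. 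Finally, for the imprimitive ($w=0$) contribution the paper simply imports the Tanaka--Thomas formula \eqref{SUK3}, whose proof rests on the Maulik--Thomas multiple cover formula, rather than rederiving the Joyce--Song wall-crossing from scratch as you sketch.
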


For surfaces with $p_g(S) = 0$, the partition functions are modular in a more complicated sense; e.g.~for $S = \PP^2$ and $r=2$ they are mock modular forms of weight $-3/2$ \cite{VW, Kly, Man}. Our interest is in surfaces with $p_g(S)>0$. 

\begin{remark} \label{GK3}
In \cite{GK3, GKL}, the authors took the following approach to the $S$-duality transformation \eqref{Sdual}. For $\SU(2)$, $\SU(3)$, and $\SU(5)$, the conjectural closed formulae for the right hand side of \eqref{trivialBr} in \cite{VW,DPS, GK3, GKL} are explicit formulae involving modular forms and $\chi(\O_S), K_S^2, a_ia_j, K_S \xi, a_i\xi, \xi^2$, where $a_i$ runs over the Seiberg-Witten basic classes of $S$. These formulae satisfy Conjecture \ref{conj1} and make sense for \emph{non-algebraic} classes $\xi$, thereby providing an ad hoc definition of $\mathsf{Z}^{\SU(r) / \Z_r}_{c_1}(q)$. See \cite{GK3, GKL} for the precise statement. A non-trivial calculation shows that this ad hoc definition satisfies $S$-duality (\cite{VW,DPS} for the $\SU(2)$ case, \cite{GK3} for the $\SU(3)$ case, and \cite{GKL} for the $\SU(5)$ case). 
\end{remark}

\subsection*{$K3$ surfaces}

For $S$ a $K3$ surface, $\mathsf{Z}_{c_1}^{\SU(r)}(q)$ was determined by Tanaka-Thomas \cite{TT2} using a multiple cover formula of Y.~Toda \cite{Tod1, Tod2} proved by Maulik-Thomas \cite{MT}. This requires equating virtual invariants to invariants defined by Behrend function. The fact that ``virtual'' and ``motivic'' invariants coincide is a special feature of $K3$ surfaces and  generally does not hold for other surfaces satisfying $p_g(S)>0$ \cite{MT}. The formula for $\mathsf{Z}_{c_1}^{\SU(r)}(q)$ satisfies Conjecture \ref{conj1}. 

We carry out the calculation on the $\SU(r) / \Z_r$-side. For $w \in H^2(S,\mu_r)$ with trivial Brauer class, the contribution to the partition function immediately reduces to Tanaka-Thomas's calculation. For $w \in H^2(S,\mu_r)$ with non-trivial Brauer class, there are no vertical contributions and we use Yoshioka's deformation of moduli spaces of twisted sheaves to Hilbert schemes of points. In order to describe our result, we define 
\begin{align}
\begin{split} \label{prep}
\delta_{ab} &:= \Bigg\{\begin{array}{cc} 1 & \mathrm{if} \ a-b \in rH^2(S,\Z) \\ 0 & \mathrm{otherwise,} \end{array} \\
\Delta(q) &:= q \prod_{n=1}^{\infty} (1-q^n)^{24}.
\end{split}
\end{align}

\begin{theorem} \label{mainthm}
For any $K3$ surface $S$, prime number $r$, generic\footnote{The precise genericity we (and \cite{TT2}) require is described in Section \ref{sec:pfmainthm}.} polarizations, and $c_1 \in H^2(S,\Z)$ algebraic, we have 
\begin{align*}
\mathsf{Z}^{\SU(r) / \Z_r}_{c_1}(q) &= \sum_{w \in H^2(S,\mu_r)} e^{\frac{2 \pi i}{r}(w \cdot c_1)} \mathsf{Z}_{w}(q), \quad \textrm{where} \\
\sfZ_{w}(q) &= \frac{\delta_{w,0}}{r^2} \, \Delta(q^r)^{-1} + \frac{1}{r} \sum_{j=0}^{r-1} e^{-\frac{\pi i j}{r} w^2} \,  \Delta(e^{\frac{2 \pi i j}{r}} q^{\frac{1}{r}})^{-1}.
\end{align*}
\end{theorem}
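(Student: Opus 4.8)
The plan is to compute each $\sfZ_w(q)$ separately, splitting into the trivial and non-trivial Brauer class cases, and then to verify that the resulting closed formula can be recombined into the stated expression. For $w$ with trivial Brauer class $\alpha = o(w) = 0$, I would invoke the reduction already indicated in the excerpt: there is an algebraic $\xi \in H^2(S,\Z)$ with $[\xi] = w$, and $\sfZ_w(q)$ is exactly Tanaka–Thomas's $\SU(r)$ contribution $\mathsf{Z}_{c_1}^{\SU(r)}$-type sum with $c_1$ replaced by $\xi$. For a $K3$ surface this was evaluated in \cite{TT2} via Toda's multiple cover formula (proved in \cite{MT}); the output is a sum of $\Delta$-factors, and after matching the normalization $q^{-\chi(\O_S)/2r + r K_S^2/24} = q^{-1/r}$ (since $\chi(\O_S) = 2$, $K_S = 0$ for $K3$) one reads off precisely $\frac{1}{r^2}\Delta(q^r)^{-1} + \frac1r\sum_{j=0}^{r-1}\Delta(e^{2\pi i j/r}q^{1/r})^{-1}$. (When $w \ne 0$ but $\alpha = 0$, the same formula holds with the $\delta_{w,0}/r^2$ term dropping out precisely because $\xi$ is a primitive-enough nonzero class — I would check this against the case analysis in \cite{TT2}, where the coefficient of $\Delta(q^r)^{-1}$ is governed by whether the relevant Mukai vector is divisible by $r$.) This justifies the $\delta_{w,0}$ prefactor: it records whether $\xi \in r H^2(S,\Z)$.

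For $w$ with non-trivial Brauer class $\alpha \ne 0$, the excerpt tells us there are no vertical (monopole) components, so $\sfZ_w(q) = q^{-1/r}\sum_n q^{\frac{1}{2r}\vd(r,\xi,n)} e^{\vir}(M_{Y,\xi/r}(r,\xi,n))$, and on a $K3$ the virtual Euler characteristic equals the ordinary (signed) Euler characteristic because $M_{Y,\xi/r}$ is smooth. So I would reduce to computing $\sum_n q^{\ast} e(M_{Y,\xi/r}(r,\xi,n))$. Here the key input is Yoshioka's deformation of moduli of twisted sheaves: the moduli space $M_{Y,\xi/r}(r,\xi,n)$ is deformation-equivalent (as a smooth projective variety, hence with the same Euler characteristic) to a Hilbert scheme of points $\Hilb^m(S)$ on the $K3$, where $m$ is determined by the Mukai vector $v = e^{\xi/r}\ch_G(E) = (r,\xi,\tfrac12\xi^2 - n)$ through $v^2 = 2m - 2$, i.e. $m$ is a linear function of $n$. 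Using Göttsche's formula $\sum_m e(\Hilb^m(S)) q^m = \prod(1-q^k)^{-24} = q/\Delta(q)$, and tracking how the normalization and the shift $q^{\vd/2r}$ interact with the substitution $n \mapsto m$, one obtains a single factor of the form $\frac1r\,\Delta(e^{2\pi i j/r}q^{1/r})^{-1}$ for the appropriate residue class $j$ determined by $w^2 \bmod r$ — the phase $e^{-\pi i j w^2/r}$ emerging from the fractional power of $q$ in the exponent $q^{\frac{1}{2r}\vd}$ once $\vd$ is rewritten in terms of $m$. Summing the contributions over all $w$ in a fixed $o^{-1}(\alpha)$-coset (a torsor under $H^2(S,\mu_r)$'s kernel subgroup) is what assembles the full $\frac1r\sum_{j=0}^{r-1}$ with the phases, matching the claimed formula with $\delta_{w,0} = 0$.

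The step I expect to be the main obstacle is the bookkeeping of the normalization and the emergence of the root-of-unity phases $e^{2\pi i j/r}$ and $e^{-\pi i j w^2/r}$. Concretely: the exponent $\frac{1}{2r}\vd(r,\xi,n) = \frac{1}{2r}(2rn - (r-1)\xi^2 - (r^2-1)\chi(\O_S))$ involves $\xi^2/2r$, which is genuinely fractional when $\xi$ is not divisible by $r$, and it is exactly this fractional part — together with the integrality $n \in \Z$ from Proposition \ref{integrality} and the relation $v^2 = 2m-2$ — that forces the $q^{1/r}$-expansion and picks out the correct sector $j \equiv (\text{something involving } w\cdot\xi, \xi^2) \bmod r$. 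I would carry this out by writing $\xi^2 = r^2 k + \ell$ with $0 \le \ell < r^2$, relating $\ell$ to $w^2 \bmod r$, and then doing the substitution carefully; getting the phase $e^{-\pi i j w^2/r}$ rather than some other $r$-th root of unity is the delicate point, and I would cross-check it against the $r=2,3,5$ formulae of \cite{VW,DPS,GK3,GKL} recalled in Remark \ref{GK3}, and against the $S$-duality transformation of Conjecture \ref{Sdualconj}, which is proved in this paper for $K3$ and therefore must be consistent with the final formula. A secondary (more routine) obstacle is making sure the genericity of polarization is used correctly so that strictly semistable twisted sheaves do not appear, or else handling them via the Joyce–Song pair formalism of Section \ref{sec:JS}; on a $K3$ with generic polarization and the Mukai vector at hand this should reduce to a standard argument.
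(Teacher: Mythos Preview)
Your overall architecture is right and matches the paper: split according to whether the Brauer class $\alpha = o(w)$ is trivial, invoke Tanaka--Thomas's $K3$ formula \eqref{SUK3} in the trivial case, and in the non-trivial case use smoothness of $M_{Y,\xi/r}(r,\xi,n)$, Yoshioka's deformation equivalence $M_{Y,\xi/r}(r,\xi,n) \simeq \Hilb^{\vd(r,\xi,n)/2}(S)$, and G\"ottsche's formula. That is exactly how the paper proceeds.

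The genuine gap is in your non-trivial Brauer class paragraph. You assert that for a fixed $w$ one obtains a \emph{single} term $\tfrac{1}{r}\Delta(e^{2\pi i j/r}q^{1/r})^{-1}$ for one particular $j$, and that the full sum $\tfrac{1}{r}\sum_{j=0}^{r-1}$ is assembled by summing over $w$ in a coset of $\ker(o)$. This is incorrect on both counts. The theorem gives the formula for each individual $\sfZ_w(q)$, and for each fixed $w$ the entire sum over $j$ already appears. What happens is that as $n$ ranges over $\Z$, the quantity $m := \vd(r,\xi,n)/2 = rn - \tfrac{1}{2}(r-1)\xi^2 - (r^2-1)$ runs over a \emph{single residue class} modulo $r$; the paper then applies the elementary roots-of-unity identity
\[
\sum_{m \equiv k \!\!\!\! \pmod r} \psi_m x^m \;=\; \frac{1}{r}\sum_{j=0}^{r-1} e^{-2\pi i jk/r}\,\psi(e^{2\pi i j/r}x)
\]
(Lemma \ref{lem}) with $\psi$ the G\"ottsche generating series and $x = q^{1/r}$. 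This is where the full sum over $j$ and the phases $e^{-\pi i j w^2/r}$ come from, not from any coset sum over $w$. Picking out one residue class does \emph{not} correspond to one value of $j$; the right-hand side above involves all $j$. Your proposed coset-summing step would compute the wrong object and could not recover the per-$w$ formula in the statement.

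Two smaller points. First, in the trivial Brauer class case with $w \neq 0$ you omit the phase $e^{-\pi i j \xi^2/r}$ in the $\sum_j$; it is present in \eqref{SUK3} and survives to $\sfZ_w(q) = r\,\sfZ_\xi^{\SU(r)}(q)$. Second, for non-trivial Brauer class there is no semistability issue at all: by Theorem \ref{conethm} (using that $r$ is prime and $\alpha \neq 0$), every rank $r$ torsion free $Y$-sheaf is automatically stable for any $H$, so the Joyce--Song formalism is not needed here and the polarization plays no role.
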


Combining Theorem \ref{mainthm} with Tanaka-Thomas's formula for $\mathsf{Z}^{\SU(r)}_{c_1}(q)$ and some interesting lattice theoretic identities for $H^2(S,\Z)$, called flux sums in physics (Section \ref{sec:Sdual}), we establish the $S$-duality conjecture for $K3$ surfaces and prime rank:

\begin{theorem} \label{maincor}
The $S$-duality conjecture (Conjecture \ref{Sdualconj}) holds for all $K3$ surfaces, prime ranks, and generic polarizations.
\end{theorem}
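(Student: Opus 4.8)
The plan is to deduce Theorem~\ref{maincor} from Theorem~\ref{mainthm} together with Tanaka-Thomas's closed formula for $\sfZ^{\SU(r)}_{c_1}(q)$ on $K3$ surfaces. First I would recall the $\SU(r)$ side: for a $K3$ surface, $K_S = 0$ so there are no vertical components and the normalization reduces to a power of $q$, and \cite{TT2} gives (via Toda's multiple cover formula as proved by Maulik-Thomas) a clean expression for $\sfZ^{\SU(r)}_{c_1}(q)$ as a sum of $\Delta(q^{d})^{-1}$-type terms indexed by divisors of $r$. Since $r$ is prime, only $d \in \{1,r\}$ occur, so $\sfZ^{\SU(r)}_{c_1}(q)$ is an explicit combination of $\Delta(q)^{-1}$ and $\Delta(q^r)^{-1}$ (with a coefficient that may depend on $c_1$ modulo $r$). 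The goal is then purely a check that applying the modular transformation $\tau \mapsto -1/\tau$ to this expression, and using $\Delta(-1/\tau) = \tau^{12}\Delta(\tau)$ together with the factor $(-1)^{(r-1)\chi(\O_S)}(r\tau/i)^{-e(S)/2}$ with $\chi(\O_S) = 2$ and $e(S) = 24$ for a $K3$, reproduces exactly $\sfZ^{\SU(r)/\Z_r}_{c_1}(q)$ as computed in Theorem~\ref{mainthm}.

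The key steps, in order, are: (1) substitute $q = e^{2\pi i \tau}$ and rewrite everything in terms of $\eta(\tau)^{24} = \Delta(\tau)$, being careful about the fractional powers $q^{1/r}$ and the roots of unity $e^{2\pi i j/r}$ appearing in Theorem~\ref{mainthm}, which correspond to the $r$ cusps/translates $\tau \mapsto (\tau+j)/r$; (2) expand the right-hand side $\sfZ^{\SU(r)/\Z_r}_{c_1}(q) = \sum_{w} e^{\frac{2\pi i}{r}(w\cdot c_1)} \sfZ_w(q)$ using the formula for $\sfZ_w$, splitting the $w=0$ term (which carries the $\Delta(q^r)^{-1}/r^2$ piece) from the sum over the $j$-translates; (3) perform the sum over $w \in H^2(S,\mu_r)$ of the phase $e^{\frac{2\pi i}{r}(w\cdot c_1)} e^{-\frac{\pi i j}{r} w^2}$ — this is the "flux sum" / Gauss sum over the finite quadratic form on $H^2(S,\Z/r)$ induced by the intersection form, and it is here that the lattice theory of Section~\ref{sec:Sdual} enters; (4) evaluate that Gauss sum explicitly (it will produce a power of $r$ and a root of unity depending on the signature $(3,19)$ of the $K3$ lattice and on whether $c_1 \equiv 0$, matching the $\delta_{ab}$ and the prefactors); (5) assemble the pieces and compare termwise with the modular transform of Tanaka-Thomas's $\sfZ^{\SU(r)}_{c_1}$, checking the overall constant $(-1)^{(r-1)\cdot 2}(r\tau/i)^{-12} = (i/(r\tau))^{12}$ against the weight-$12$ transformation of $\Delta$.

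The main obstacle I expect is step (3)–(4): correctly computing the quadratic Gauss sum $\sum_{w \in H^2(S,\Z/r)} e^{\frac{2\pi i}{r}(w\cdot c_1 - \frac{j}{2} w^2)}$ over the discriminant form of the $K3$ lattice, getting all the roots of unity right (reciprocity laws for Gauss sums, the dependence on $r \bmod 4$, the signature contribution $e^{2\pi i \cdot \mathrm{sign}/8}$), and reconciling the half-integer $\frac{1}{2}w^2$ with the fact that $H^2(S,\Z)$ is an even lattice so $w^2$ is well-defined modulo $2r$. One also has to handle the bookkeeping of the fractional-power terms $\Delta(e^{2\pi i j/r} q^{1/r})^{-1}$: under $\tau \mapsto -1/\tau$ the set $\{(\tau+j)/r : j\} \cup \{r\tau\}$ is permuted among $\{-1/(r\tau)\} \cup \{(-1/\tau + k)/r\}$ in a way governed by $\mathrm{SL}_2(\Z/r)$, and showing the matching is exactly the content of the flux-sum identities. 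Once the genericity hypotheses (no strictly semistable objects, or their treatment via Joyce-Song pairs) are invoked exactly as in Theorem~\ref{mainthm} and \cite{TT2}, the remainder is a finite, if delicate, verification, so I would present it as a computation organized around the Gauss-sum lemma rather than a conceptual argument.
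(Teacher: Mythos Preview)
Your proposal is correct and follows essentially the same approach as the paper: combine Tanaka--Thomas's formula \eqref{SUK3} with Theorem~\ref{mainthm}, evaluate the flux sums over $H^2(S,\mu_r)$, and match against the modular transformation of $\Delta$. The paper quotes the two flux-sum identities from \cite{GK3} and the explicit action of $\tau\mapsto -1/\tau$ on $\Delta(r\tau)$ and $\Delta((\tau+j)/r)$ via the permutation $j\mapsto n_j$ with $jn_j\equiv -1\bmod r$; your anticipated complications with $r\bmod 4$ and signature contributions do not actually arise, since the $K3$ lattice is even unimodular of rank $22$, which makes the Gauss sum elementary.
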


\noindent \textbf{Acknowledgements.} We thank Amin Gholampour, Lothar G\"ottsche, and Ties Laarakker for helpful discussions related to this paper. Special thanks go to Richard Thomas, who suggested looking at Brauer classes during discussions on Remark \ref{GK3}. 
The authors would like to thank the Institute of Mathematical Sciences at ShanghaiTech, where most of this work was carried out. Y.J.~is partially supported by NSF DMS-1600997. M.K.~is supported by NWO grant VI.Vidi.192.012.

\section{Moduli space}

\subsection{Categories of twisted sheaves} \label{sec:cat}

Let $S$ be a smooth projective surface satisfying $H_1(S,\Z) = 0$ and let $r \in \Z_{>0}$. For the moment, we do not assume $r$ is prime. We discuss two important equivalences of categories. 

\subsection*{Spectral construction} The first is the spectral construction.

Let $\pi : X=\mathrm{Tot}_S(K_S) \rightarrow S$ be the total space of the canonical bundle on $S$. Push-forward along $\pi$ induces an equivalence between the category of quasi-coherent sheaves on $X$ and the category of quasi-coherent sheaves of $\Sym^\mdot K_S^{\vee}$-algebras on $S$. The latter can also be seen as pairs $(E,\phi)$, where $E$ is a quasi-coherent sheaf on $S$ and $\phi : E \rightarrow E \otimes K_S$ is a morphism. Denote by $\mathrm{Coh}_c(X)$ the category of coherent sheaves on $X$ with proper support. We write $\mathrm{Higgs}_{K_S}(S)$ for the category of pairs $(E,\phi)$ as above with $E$ coherent, which we refer to as Higgs pairs on $S$. Push-forward along $\pi$ yields an equivalence 
$$
\pi_* : \mathrm{Coh}_c(X) \stackrel{\sim}{\rightarrow} \mathrm{Higgs}_{K_S}(S), \quad \mathcal{E} \mapsto \pi_* \cE. 
$$

Let $p : Y \rightarrow S$ be a $\PP^{r-1}$-bundle corresponding to a class in $[Y] \in H^1(S,\mathrm{PGL}(r))$. Using the maps from the introduction, let $w(Y) := \delta'([Y]) \in H^2(S,\mu_r)$ and let $\alpha := o(w) \in H^2(S,\O_S^*)$. Consider an analytic open cover $\{U_i\}_i$ on which $Y$ trivializes, i.e.~$Y_i := p^{-1}(U_i) \cong U_i \times \PP^{r-1}$, and denote by $\O_{Y_i}(\lambda_i)$ a tautological line bundle on $Y_i$. 

The class $\alpha$ can be represented by a \v{C}ech cocycle $\{\alpha_{ijk} \in \Gamma(U_i \cap U_j \cap U_k,\O_S^*)\}_{i,j,k}$. An $\alpha$-twisted sheaf on $S$ is a collection $\{(E_i,\chi_{ij})\}_{i,j}$, where $E_i$ is a coherent sheaf on $U_i$ and $\chi_{ij} : E_i|_{U_i \cap U_j} \rightarrow E_j|_{U_i \cap U_j}$ is an isomorphism, such that
$$
\chi_{ii} = \mathrm{id}, \quad \chi_{ji} = \chi_{ij}^{-1}, \quad \chi_{ki} \circ \chi_{jk} \circ \chi_{ij} = \alpha_{ijk} \cdot \mathrm{id}, \quad \forall i,j,k.
$$ 
We denote the category of $\alpha$-twisted sheaves on $S$ by $\mathrm{Coh}(S,\alpha)$. (Derived) categories of $\alpha$-twisted sheaves were introduced by A.~C$\breve{\textrm{a}}$ld$\breve{\textrm{a}}$raru \cite{Cal}. 

An $\alpha$-twisted Higgs pair on $S$ is a pair $(E,\phi)$, where $E$ is an $\alpha$-twisted sheaf on $S$ and $\phi : E \rightarrow E \otimes K_S$ is a morphism. We denote the category of $\alpha$-twisted Higgs pairs on $S$ by $\mathrm{Higgs}_{K_S}(S,\alpha)$. Representing $\pi^* \alpha$ by $\{ \pi^* \alpha_{ijk} \in \Gamma(\pi^{-1}(U_i \cap U_j \cap U_k),\O_X^*)\}_{i,j,k}$, the spectral construction gives an equivalence
$$
\pi_* : \mathrm{Coh}_c(X,\pi^* \alpha) \stackrel{\sim}{\rightarrow} \mathrm{Higgs}_{K_S}(S,\alpha), \quad \mathcal{E} \mapsto \pi_* \cE.
$$

\subsection*{$Y$-sheaves} We recall Yoshioka's notion of a $Y$-sheaf \cite[Def.~1.3]{Yos4}. A coherent sheaf $E$ on $Y$ is called a $Y$-sheaf if there exists an isomorphism
$$
E|_{Y_i} \cong p^*(E_i) \otimes \O_{Y_i}(\lambda_i)
$$
for some coherent sheaf $E_i$ on $U_i$ for all $i$. Following Yoshioka's notation, we denote by $\Coh(S,Y) \subset \Coh(Y)$ the category of $Y$-sheaves. By \cite[Lem.~1.5]{Yos4},  ``$E \in \Coh(S,Y)$'' is an open condition on $\Coh(Y)$. We fix a system of isomorphisms
$
\eta_{ij} : \O_{Y_i \cap Y_j}(\lambda_i) \rightarrow \O_{Y_i \cap Y_j}(\lambda_j),
$
where $\eta_{ii} = \mathrm{id}$, $\eta_{ji} = \eta_{ij}^{-1}$, and $\eta_{ki} \circ \eta_{jk} \circ \eta_{ij} = p^*\alpha_{ijk}^{-1} \cdot \mathrm{id}$ for all $i,j,k$. Then $\{(\O_{Y_i}(\lambda_i),\eta_{ij})\}_{i,j}$ defines a $p^* \alpha^{-1}$-twisted line bundle on $Y$ denotes by $L(p^* \alpha^{-1})$. There is an equivalence of categories
$$
\Lambda^{L(p^* \alpha^{-1})} : \Coh(S,Y) \stackrel{\sim}{\rightarrow} \Coh(S,\alpha), \quad E \mapsto p_*(E \otimes L(p^* \alpha^{-1})^{\vee}). 
$$

We combine both constructions. Let $X_Y := X \times_S Y$. We denote the projections by $\pi : X_Y \rightarrow Y$ and $p : X_Y \rightarrow X$. At the same time, $X_Y = \mathrm{Tot}_Y(p^* K_S)$ and $X_Y$ is a $\PP^{r-1}$-bundle over $X$ trivialized by $\{\pi^{-1}(U_i)\}_i$:
\begin{align} 
\begin{split} \label{Cartesiansq}
\xymatrix
{
X_Y \ar^p[r] \ar_{\pi}[d] & X \ar^{\pi}[d] \\
Y \ar_p[r] & S.
}
\end{split}
\end{align}
Using the $p^* \pi^*\alpha^{-1}$-twisted line bundle $L(p^* \pi^*(\alpha^{-1}))$ on $X_Y$, we obtain an equivalence between the categories of $X_Y$-sheaves and $\pi^* \alpha$-twisted sheaves on $X$. Moreover, the spectral construction applied to $\pi : X_Y \rightarrow Y$ gives an equivalence
$$
\pi_* : \mathrm{Coh}_c(X_Y) \stackrel{\sim}{\rightarrow} \mathrm{Higgs}_{p^*K_S}(Y), \quad \cE \mapsto \pi_* \cE,
$$
where $\mathrm{Higgs}_{p^*K_S}(Y)$ denotes the category of pairs $(E,\phi)$ with $E$ a coherent sheaf on $Y$ and $\phi : E \rightarrow E \otimes p^* K_S$ a morphism. Moreover, if $\cE$ is an $X_Y$-sheaf then $E = \pi_* \cE$ is a $Y$-sheaf (by diagram \eqref{Cartesiansq} and the projection formula). Denoting by $\mathrm{Higgs}_{p^* K_S}(S,Y)$ the category of pairs $(E,\phi)$ with $E$ a $Y$-sheaf, we obtain a diagram 
\begin{displaymath}
\xymatrix@C=3cm
{
\mathrm{Coh}_c(X,X_Y) \ar_{\sim}^{\Lambda^{L(p^* \pi^*\alpha^{-1})}}[r] \ar^{\sim}_{\pi_*}[d] &\mathrm{Coh}_c(X,\pi^*\alpha) \ar_{\sim}^{\pi_*}[d] \\
\mathrm{Higgs}_{p^* K_S}(S,Y) \ar^{\sim}_{\Lambda^{L(p^* \alpha^{-1})}}[r] & \mathrm{Higgs}_{K_S}(S,\alpha).
}
\end{displaymath}
Commutativity of the diagram follows from diagram \eqref{Cartesiansq},  $L(p^* \pi^*\alpha^{-1}) \cong \pi^* L(p^*\alpha^{-1})$, and the projection formula. We refer to elements of $\mathrm{Higgs}_{p^* K_S}(S,Y)$ as $Y$-Higgs pairs.

\subsection{Twisted Chern character} \label{sec:chern}

As in the previous section, $S$ is a smooth projective surface with $H_1(S,\Z)=0$, $r \in \Z_{>0}$ (not necessarily prime), and $p : Y \rightarrow S$ is a $\PP^{r-1}$-bundle. Let $w:=w(Y) \in H^2(S,\mu_r)$. We are interested in Chern characters of $Y$-sheaves.  

Denote the Grothendieck group of $Y$-sheaves by $K(S,Y)$. The dimension of a $Y$-sheaf $E$ is defined as $\dim p_* E$. The following result is \cite[Lem.~3.2]{Yos4}.\footnote{In loc.~cit.~this lemma is stated in a section on $K3$ surfaces, but the proof holds for any $S$.}
\begin{lemma}\cite[Lem.~3.2]{Yos4} \label{Yoslem1}
\hfill
\begin{enumerate}
\item There exists a locally free $Y$-sheaf $E_0$ such that $$\rk(E_0) = \min \{ \rk E > 0 \, : \, E \in \Coh(S,Y) \}.$$ 
\item  $K(S,Y) = \Z E_0 \oplus K(S,Y)_{\leq 1}$, where $K(S,Y)_{\leq 1}$ denotes the subgroup generated by $Y$-sheaves of dimension $\leq 1$.
\end{enumerate}
\end{lemma}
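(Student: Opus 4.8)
The plan is to reduce everything to the fact that a $Y$-sheaf is, locally on $S$, a flat pull-back of a sheaf on the smooth \emph{surface} $S$ tensored by a line bundle, so that biduality behaves well and, crucially, reflexive becomes locally free. Throughout I use the exact equivalence $\Lambda^{L(p^* \alpha^{-1})} \colon \Coh(S,Y) \xrightarrow{\sim} \Coh(S,\alpha)$ recalled above (with $\alpha := o(w(Y)) \in \mathrm{Br}(S)$); it preserves rank and sends a $Y$-sheaf of dimension $\leq 1$ to an $\alpha$-twisted sheaf of dimension $\leq 1$, since locally $\Lambda^{L(p^*\alpha^{-1})}(E)|_{U_i} \cong E_i$. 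For part (1): if $E|_{Y_i} \cong p^*(E_i) \otimes \O_{Y_i}(\lambda_i)$, then — as $p$ is flat and biduality commutes with flat pull-back and with line-bundle twists — $E^{\vee\vee}$ is again a $Y$-sheaf, with $E^{\vee\vee}|_{Y_i} \cong p^*(E_i^{\vee\vee}) \otimes \O_{Y_i}(\lambda_i)$ and $\rk E^{\vee\vee} = \rk E$. Since $U_i \subseteq S$ is a smooth surface, the reflexive sheaf $E_i^{\vee\vee}$ is locally free, hence so is $E^{\vee\vee}$. The set $\{\rk E : E \in \Coh(S,Y),\ \rk E > 0\}$ is a nonempty subset of $\Z_{>0}$ (e.g.\ the rank-$r$ extension $G$ of $T_{Y/S}$ by $\O_Y$ from the introduction is a locally free $Y$-sheaf), so it has a minimum $\rho$, attained by some $E$; then $E_0 := E^{\vee\vee}$ is a locally free $Y$-sheaf with $\rk E_0 = \rho$. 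In particular $\rho$ equals both the minimal positive rank of a $Y$-sheaf and the minimal rank of a locally free $Y$-sheaf, i.e.\ $\rho = \mathrm{ind}(\alpha)$.

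For part (2), let $\rk \colon K(S,Y) \to \Z$ be the additive rank homomorphism. Then $K(S,Y)_{\leq 1} \subseteq \ker(\rk)$ and $\rk(E_0) = \rho > 0$, so $\Z E_0 \cap K(S,Y)_{\leq 1} = 0$; it thus suffices to prove (i) $\mathrm{image}(\rk) = \rho\,\Z$ and (ii) $\ker(\rk) = K(S,Y)_{\leq 1}$, after which the short exact sequence $0 \to \ker(\rk) \to K(S,Y) \xrightarrow{\rk} \rho\Z \to 0$ is split by $E_0$ and yields $K(S,Y) = \Z E_0 \oplus \ker(\rk) = \Z E_0 \oplus K(S,Y)_{\leq 1}$. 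For (i): I claim the rank $m$ of any $Y$-sheaf is divisible by the period $d := \mathrm{ord}(\alpha)$. Indeed, its image under $\Lambda^{L(p^*\alpha^{-1})}$ has a torsion-free quotient of rank $m$, whose bidual is a rank-$m$ locally free $\alpha$-twisted sheaf (again because $S$ is a surface); its determinant is then an $\alpha^m$-twisted line bundle, which can only exist if $\alpha^m = 0$, i.e.\ $d \mid m$. Hence $\mathrm{image}(\rk) \subseteq d\,\Z$ since the classes of $Y$-sheaves generate $K(S,Y)$. On the other hand $\rho = \mathrm{ind}(\alpha) = d$ by part (1) and de Jong's period-index theorem, and $\rho \in \mathrm{image}(\rk)$, so $\mathrm{image}(\rk) = \rho\,\Z$.

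For (ii), only the inclusion $\subseteq$ needs an argument; fix a very ample $\O_S(H)$. Any element of $K(S,Y)$ is $[\mathcal A] - [\mathcal B]$ with $\mathcal A, \mathcal B$ $Y$-sheaves; replacing each by its quotient by its maximal dimension-$\leq 1$ subsheaf changes the element only by something in $K(S,Y)_{\leq 1}$, so we may assume $\mathcal A, \mathcal B$ are torsion-free, and if the element has rank $0$ then $\rk \mathcal A = \rk \mathcal B = \rho q$ for some $q \geq 0$ by (i). It therefore suffices to show that every torsion-free $Y$-sheaf $\mathcal E$ of rank $\rho q$ satisfies $[\mathcal E] \equiv q\,[E_0] \pmod{K(S,Y)_{\leq 1}}$ (the case $q = 0$ being trivial). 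Passing to $\alpha$-twisted sheaves and taking $n \gg 0$ so that $\hom(E_0^{\oplus q}, \mathcal E) \otimes \O_S(nH)$ is globally generated, a general global section of it is a sheaf map $f \colon E_0^{\oplus q} \to \mathcal E(nH)$ between sheaves of the same rank $\rho q$ which is an isomorphism at the generic point of $S$ (the sections generate the generic stalk, and a general $\rho q \times \rho q$ matrix over $\C(S)$ is invertible); as $E_0^{\oplus q}$ is locally free and $S$ is integral, $f$ is injective, with cokernel $\mathcal T$ of rank $0$, hence of dimension $\leq 1$. Thus $[\mathcal E(nH)] = q\,[E_0] + [\mathcal T]$ with $[\mathcal T] \in K(S,Y)_{\leq 1}$, and tensoring $0 \to \O_S(-H) \to \O_S \to \O_H \to 0$ by $\mathcal E(kH)$ gives $[\mathcal E(kH)] - [\mathcal E((k-1)H)] = [\mathcal E(kH)|_H] \in K(S,Y)_{\leq 1}$ for each $k$, so $[\mathcal E(nH)] \equiv [\mathcal E]$; altogether $[\mathcal E] \equiv q\,[E_0] \pmod{K(S,Y)_{\leq 1}}$, proving (ii).

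The step I expect to be the main obstacle is (i): the filtration argument only produces $K(S,Y)_{\leq 1}$ as a subgroup of $K(S,Y)$ with free quotient, and to upgrade this to a direct sum decomposition with the \emph{minimal-rank} bundle $E_0$ as generator one genuinely needs $\rho = \mathrm{per}(\alpha)$, i.e.\ that $\rk E_0$ divides the rank of every $Y$-sheaf — this is where de Jong's period-index theorem (together with part (1), to identify $\rho$ with the index) is essential. The reflexive-hull observation is the other workhorse: used in part (1) to realize the minimal rank by a locally free sheaf, and in (i) to form the determinant line bundle.
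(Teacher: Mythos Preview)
The paper does not prove this lemma; it is quoted directly from \cite[Lem.~3.2]{Yos4} and used as a black box, so there is no in-paper argument to compare against. Your proof is correct.

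One comment worth making: the appeal to de Jong's period--index theorem in step (i) is unnecessary, and in fact your own argument for (ii) already contains the elementary substitute. Given any torsion-free $Y$-sheaf $\mathcal E$ of rank $m$, write $m = q\rho + s$ with $0 \le s < \rho$. Exactly as in (ii), for $n \gg 0$ a general map $f \colon E_0^{\oplus q} \to \mathcal E(nH)$ is injective at the generic point (a general $m \times q\rho$ matrix over $k(S)$ has full rank $q\rho \le m$), hence injective since $E_0^{\oplus q}$ is locally free; its cokernel is then a $Y$-sheaf of rank $s$. If $s > 0$ this contradicts the minimality of $\rho$, so $\rho \mid m$. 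Thus $\mathrm{image}(\rk) = \rho\,\Z$ with no input beyond what you already set up, and the lemma is entirely elementary. In the paper the period--index theorem is reserved for the genuinely deeper statements (surjectivity of $\delta'$, and $\rk(E_0) = r$ when $\alpha \neq 0$ in Theorem~\ref{conethm}); it is cleaner not to invoke it here.
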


Since we assume $H_1(S,\Z) = 0$, the groups $H_*(S,\Z)$, $H^*(S,\Z)$ do not have torsion and, by the universal coefficient theorem, we have isomorphisms $H^k(S,\Z)\otimes \mu_s \cong H^k(S,\mu_s)$ for all $k,s \geq 0$. By Leray--Hirsch, there exists a monic polynomial $f(x) \in H^*(S,\Z)[x]$ such that \cite[Lem.~1.6]{Yos4}
$$
H^*(Y,\Z) \cong H^*(S,\Z)[x] / (f(x)),
$$
where $\deg x = 2$. In particular, $p^*$ gives an inclusion $H^*(S,\Z)  \hookrightarrow H^*(Y,\Z) $, which remains injective after applying $- \otimes \mu_s$ for any $s \geq 0$.
For any $Y$-sheaf $E$ of rank $s$, $[c_1(E) \mod s] \in H^2(Y,\mu_s)$ lies in the image of $p^*$ and one defines \cite[Def.~1.4]{Yos4}
$$
w(E) := (p^*)^{-1} [c_1(E) \mod s] \in H^2(S,\mu_s).
$$
We also need the following result \cite[Lem.~3.1]{Yos4}.\footnote{In \cite{Yos4} this lemma is stated in a section on $K3$ surfaces but it holds for any $S$ with $H_1(S,\Z)_{\mathrm{tor}} = 0$.}
\begin{lemma}\cite[Lem.~3.1]{Yos4} \label{Yoslem2}
For any $Y$-sheaf of rank $\rk(E)$, we have\footnote{Using the cup product on $H^2(S,\Z)$, $[(s-1) v^2 \mod 2s] \in \Z / 2s\Z$ is well-defined for any $v \in H^2(S,\mu_s)$ and $s > 0$.}
$$
c_2(Rp_*(E \otimes E^\vee)) \equiv -(\rk(E)-1) w(E)^2 \mod 2\rk(E).
$$
\end{lemma}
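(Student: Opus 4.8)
The plan is to recognize $Rp_*(E\otimes E^\vee)$ as a class pulled down from $S$ and then read off the congruence from the Leray--Hirsch presentation $H^*(Y,\Z)\cong H^*(S,\Z)[x]/(f(x))$ by a short computation modulo $2\rk(E)$. Throughout write $s:=\rk(E)$, and read $E\otimes E^\vee$ as $R\hom_Y(E,E)$ (for $E$ locally free this is the ordinary endomorphism sheaf), as the virtual theory requires.

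First I would show that $Rp_*(E\otimes E^\vee)$ has the Chern character of a class $[\mathcal F]\in K(S)$ with $p^*[\mathcal F]=[E]\cdot[E]^\vee$ in $K(Y)$. The point is local and only involves locally free $Y$-sheaves: if $\mathcal A,\mathcal B$ are locally free $Y$-sheaves then on a trivializing chart $Y_i=p^{-1}(U_i)$ one has $\mathcal A|_{Y_i}\cong p^*A_i\otimes\O_{Y_i}(\lambda_i)$ and $\mathcal B|_{Y_i}\cong p^*B_i\otimes\O_{Y_i}(\lambda_i)$, so the tautological twists cancel and $\mathcal A\otimes\mathcal B^\vee|_{Y_i}\cong p^*(A_i\otimes B_i^\vee)$; the induced conjugation-type transition maps satisfy the \emph{untwisted} cocycle condition because the Brauer scalars cancel ($\alpha_{ijk}\alpha_{ijk}^{-1}=1$), so the $A_i\otimes B_i^\vee$ glue to a locally free sheaf on $S$ pulling back to $\mathcal A\otimes\mathcal B^\vee$. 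Resolving $E$ by locally free $Y$-sheaves and summing the resulting alternating combination gives $[E]\cdot[E]^\vee=p^*[\mathcal F]$ for some $[\mathcal F]\in K(S)$; then $Rp_*(E\otimes E^\vee)$ has Chern character $\ch(\mathcal F)$ by the projection formula together with $Rp_*\O_Y=\O_S$ (valid as $p$ is a $\PP^{r-1}$-bundle), so $c_2(Rp_*(E\otimes E^\vee))=c_2(\mathcal F)$.

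Next comes the Chern-class bookkeeping. Since $p^*\ch(\mathcal F)=\ch(E)\,\overline{\ch(E)}$ and $p^*$ is injective, $c_1(\mathcal F)=0$, and using $c_2=-\ch_2$ for a class with vanishing $\ch_1$,
\[
p^*c_2(\mathcal F)=2s\,c_2(E)-(s-1)\,c_1(E)^2\qquad\text{in }H^4(Y,\Z).
\]
By Leray--Hirsch we may take $x$ to restrict to the hyperplane class on the fibers of $p$; a $Y$-sheaf of rank $s$ restricts to $\O_{\PP^{r-1}}(1)^{\oplus s}$ on a general fiber, so $c_1(E)-sx$ restricts to $0$ there, and since $H^2(Y,\Z)=p^*H^2(S,\Z)\oplus\Z x$ this forces $c_1(E)=p^*\beta_E+s\,x$ for a unique $\beta_E\in H^2(S,\Z)$; by definition $w(E)=[\beta_E\bmod s]$.

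Finally, expand $c_1(E)^2=p^*(\beta_E^2)+2s\,x\,p^*\beta_E+s^2x^2$ and reduce the displayed identity modulo $2s$: the terms $2s\,c_2(E)$ and $2s(s-1)\,x\,p^*\beta_E$ drop out, and so does $(s-1)s^2x^2$ because $(s-1)s^2=2s\cdot\tfrac{s(s-1)}{2}$ with $\tfrac{s(s-1)}{2}\in\Z$ — which is also precisely why $-(s-1)w(E)^2$ is well defined. Hence $p^*c_2(\mathcal F)\equiv-(s-1)\,p^*(\beta_E^2)\pmod{2s}$, and since $p^*\colon H^4(S,\Z)\to H^4(Y,\Z)$ is a split injection (a direct summand, by Leray--Hirsch) it reflects divisibility by $2s$, yielding $c_2(\mathcal F)\equiv-(s-1)\beta_E^2=-(s-1)w(E)^2\pmod{2s}$, as claimed. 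The one genuinely delicate point is the first step — being precise that $R\hom_Y(E,E)$, suitably interpreted when $E$ is not locally free, really is $p^*$ of something on $S$ (honesty of the conjugated cocycle, and the descent of sheaves along the projective bundle) — everything after that is elementary.
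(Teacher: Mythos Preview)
The paper does not give its own proof of this lemma; it is stated with a citation to \cite[Lem.~3.1]{Yos4} and used as a black box in the proof of Proposition~\ref{integrality}. So there is nothing in the paper to compare against.

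That said, your argument is correct and self-contained. The key steps---that $E\otimes E^\vee$ descends to $S$ because the Brauer twists cancel, the Chern-character computation giving $p^*c_2(\mathcal F)=2s\,c_2(E)-(s-1)c_1(E)^2$, the identification $c_1(E)=p^*\beta_E+sx$ from the fibrewise behaviour of $Y$-sheaves, and the mod-$2s$ reduction using $\tfrac{s(s-1)}{2}\in\Z$---are all sound. Your remark that $p^*$ reflects divisibility by $2s$ because it realizes $H^4(S,\Z)$ as a direct summand of $H^4(Y,\Z)$ is the right way to finish (and uses that $H^*(S,\Z)$ is torsion-free, which the paper records). The one place you flag as delicate, the descent of $R\hom_Y(E,E)$, is indeed the substantive point, and your reduction to the locally free case via resolutions is fine since $\Coh(S,Y)\simeq\Coh(S,\alpha)$ has enough locally free objects.
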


Denote by $T_{Y/S}$ the relative tangent bundle of $p : Y \rightarrow S$, then \cite[Lem.~1.1]{Yos4}
$$
\Ext^1_Y(T_{Y/S},\O_Y) = \C.
$$
We denote by $G$ the sheaf determined by the unique (up to scale) non-trivial extension. Continuing the notation of the previous section, $G$ can also be obtained by gluing
$$
G_i := G|_{Y_i} \cong p^*( (p_* \O_{Y_i}(\lambda_i))^\vee  )(\lambda_i).
$$
This also shows that $G$ is a locally free $Y$-sheaf of rank $r$.

\begin{definition} \label{defchG}
For any $Y$-sheaf $E$, we define
$$
\ch_{G}(E) := \frac{\ch(Rp_*(E \otimes G^\vee))}{\sqrt{\ch(Rp_*( G \otimes G^\vee))}} \in H^*(S,\mathbb{Q}).
$$
\end{definition}
 
In this definition, writing $\ch_{G}(E) = (s, \zeta, \tfrac{1}{2} \zeta^2 - b) \in H^*(S,\mathbb{Q})$, we have $s = \rk(E)$ and the classes $\zeta, b$ are in general \emph{rational}. The following proposition states that, they become integer after ``twisting with appropriate rational $B$-field''. For $K3$ surfaces and using Mukai vector $v_G(E)$ instead of $\ch_G(E)$, the following proposition reduces to \cite[Lem.~3.3]{Yos4}. We closely follow Yoshioka's proof. 
\begin{proposition} \label{integrality}
Let $\xi \in H^2(S,\Z)$ such that $[\xi] = w \in H^2(S,\mu_r)$. Let $E$ be a $Y$-sheaf of rank $\rk(E)$ and write $e^{\xi / r}\ch_{G}(E) = (s, D, \tfrac{1}{2} D^2 - n) \in H^*(S,\mathbb{Q})$. Then
$$
s = \rk(E) \in \Z, \quad D \in H^2(S,\Z), \quad n \in \Z.
$$
\end{proposition}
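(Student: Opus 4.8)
The plan is to exploit additivity. Since $\ch_G(-)$, hence $\Phi(-):=e^{\xi/r}\ch_G(-)$, is the exact functor $Rp_*(-\otimes G^\vee)$ followed by multiplication by the fixed classes $e^{\xi/r}$ and $\sqrt{\ch(Rp_*(G\otimes G^\vee))}^{\,-1}$ of the ring $H^*(S,\Q)$, it factors through a group homomorphism $\Phi\colon K(S,Y)\to H^*(S,\Q)$. I would take its target to be $\Gamma:=\mathrm{im}\bigl(\ch\colon K(S)\to H^*(S,\Q)\bigr)=\{(s,D,\tfrac12 D^2-n):s,n\in\Z,\ D\in H^2(S,\Z)\}$ (the explicit description uses that $H^*(S,\Z)$ is torsion-free), which is a subgroup. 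By Lemma~\ref{Yoslem1}, $K(S,Y)=\Z E_0\oplus K(S,Y)_{\leq 1}$, so it suffices to prove $\Phi(E_0)\in\Gamma$ and $\Phi(F)\in\Gamma$ for $F$ running through generators of $K(S,Y)_{\leq 1}$; then any $Y$-sheaf has $[E]=a[E_0]+\beta$ with $\beta\in K(S,Y)_{\leq 1}$, hence $\Phi(E)\in\Gamma$. The rank claim $s=\rk(E)$ is immediate: on a fibre $\PP^{r-1}$ of $p$ one has $E\otimes G^\vee\cong\O^{\oplus r\rk(E)}$, so $\rk Rp_*(E\otimes G^\vee)=r\rk(E)$, while $\sqrt{\ch(Rp_*(G\otimes G^\vee))}$ has rank $r$ and $e^{\xi/r}$ has rank $1$.

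For the generators of $K(S,Y)_{\leq 1}$: filtering a dimension-$\leq 1$ $Y$-sheaf along its reduced support (d\'evissage) reduces us to $F=j_*(p_C^*\mathcal G\otimes\O(\lambda))$ with $j\colon p^{-1}(C)\hookrightarrow Y$, $i\colon C\hookrightarrow S$ a divisorial embedding of an integral $C$ of dimension $\leq 1$, and $\mathcal G\in\Coh(C)$ — here I use that $i^*\alpha=0$ in $\mathrm{Br}(C)$ (Tsen), so $p^{-1}(C)\cong\PP(W_C)$ and $\O(\lambda)$ descends to $\O_{\PP(W_C)}(1)$. Writing $G|_{p^{-1}(C)}\cong p_C^*\widetilde W\otimes\O(\lambda)$ for a rank $r$ bundle $\widetilde W$ on $C$, the projection formula and flat base change give $Rp_*(F\otimes G^\vee)\cong i_*(\mathcal G\otimes\widetilde W^\vee)$, a rank-$0$ sheaf on $S$; Grothendieck--Riemann--Roch for $i$ then makes $\ch_G(F)$ explicit, and after multiplying by $e^{\xi/r}$ one finds $D=\rk(\mathcal G)\,[C]\in H^2(S,\Z)$ and
$$n=\tfrac12\rk(\mathcal G)\bigl(\rk(\mathcal G)+1\bigr)C^2-\deg\mathcal G-\tfrac{\rk(\mathcal G)}{r}\bigl(C\cdot\xi-\deg\widetilde W\bigr)\in\Z ,$$
because the first summand and $\deg\mathcal G$ are integers and $C\cdot\xi\equiv\deg\widetilde W\bmod r$: indeed $[\deg\widetilde W\bmod r]=w\bigl(G|_{p^{-1}(C)}\bigr)=i^*w(G)=i^*w=[(C\cdot\xi)\bmod r]$ in $H^2(C,\mu_r)\cong\mu_r$, using the normalization $w(G)=w(Y)=w$ (and up to a fixed sign in this congruence).

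The substantial case is $\Phi(E_0)\in\Gamma$; here I would let $E_0$ be a minimal-rank locally free $Y$-sheaf, of rank $m$ — when $\alpha\neq 0$ and $r$ is prime one may simply take $E_0=G$ ($m=r$) by Theorem~\ref{conethm}, and when $\alpha=0$ one has $m=1$. Write $c_1(E_0)=p^*\gamma_0+mx$ and $c_1(G)=p^*\gamma_G+rx$ in $H^2(Y,\Z)=p^*H^2(S,\Z)\oplus\Z x$, so $w(E_0)=[\gamma_0\bmod m]$ and $\gamma_G\equiv\xi\bmod r$ (from $w(G)=w$). The crucial observation is that $E_0\otimes G^\vee$ is fibrewise trivial with $R^{>0}p_*=0$, hence a pullback: $E_0\otimes G^\vee\cong p^*\mathcal B$ with $\mathcal B:=Rp_*(E_0\otimes G^\vee)$. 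Comparing first Chern classes (the $x$-terms cancel in $r\,c_1(E_0)-m\,c_1(G)$) gives $c_1(\mathcal B)=r\gamma_0-m\gamma_G$ with \emph{no} Todd correction, whence $D=\gamma_0+\tfrac{m}{r}(\xi-\gamma_G)\in H^2(S,\Z)$ and $D\equiv w(E_0)\bmod m$. For the last component, $\mathcal A_0:=Rp_*(E_0\otimes E_0^\vee)$ and $\mathcal A:=Rp_*(G\otimes G^\vee)$ are Azumaya algebras of ranks $m^2,r^2$ with $c_1=0$ (their determinants have trivial transition functions), and a canonical $\O_S$-linear isomorphism $\mathcal B\otimes\mathcal B^\vee\cong\mathcal A_0\otimes\mathcal A$ yields $\ch_G(E_0)\cdot\ch_G(E_0)^\vee=\ch(\mathcal A_0)$, where $v\mapsto v^\vee$ is the duality involution of $H^*(S,\Q)$. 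Expanding $e^{\xi/r}\ch_G(E_0)=(m,D,\tfrac12 D^2-n)$ and substituting produces $n=\frac{(m-1)D^2+c_2(\mathcal A_0)}{2m}$. Lemma~\ref{Yoslem2} applied to $E_0$ reads $c_2(\mathcal A_0)\equiv-(m-1)w(E_0)^2\bmod 2m$; together with $D\equiv w(E_0)\bmod m$ and $2\mid m(m-1)$ this forces $(m-1)D^2+c_2(\mathcal A_0)\equiv 0\bmod 2m$, so $n\in\Z$.

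The step I expect to be the real obstacle is the last one — the appeal to Lemma~\ref{Yoslem2} — since that is where de Jong's period--index theorem is hidden, both in producing the minimal-rank locally free $Y$-sheaf $E_0$ and in the quadratic congruence for $c_2(Rp_*(E_0\otimes E_0^\vee))$ modulo $2m$. The remaining delicate points are bookkeeping: fixing the normalization $w(G)=w(Y)$ and the sign in $\deg\widetilde W\equiv\pm\,C\cdot\xi\bmod r$ consistently with \cite{Yos4}, and noticing that $E_0\otimes G^\vee$ is a pullback, which is exactly what removes the Todd corrections and gives integrality of $D$ with no further input.
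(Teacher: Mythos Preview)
Your argument is essentially correct, but it takes a genuinely different and more circuitous route than the paper. The paper never invokes the $K$-theory decomposition $K(S,Y)=\Z E_0\oplus K(S,Y)_{\leq 1}$ (Lemma~\ref{Yoslem1}) at all; instead it proves both $D\in H^2(S,\Z)$ and $n\in\Z$ directly for an \emph{arbitrary} $Y$-sheaf $E$ of rank $s$. For $D$, one simply computes $p^*D=c_1(E)-s\bigl((c_1(G)-p^*\xi)/r\bigr)$ and uses $w(G)=w=[\xi]$ to see the fraction is integral; this also yields $[D\bmod s]=w(E)$. For $n$, the paper computes $\chi(\O_Y)-\chi(E,E)$ two ways: once via Lemma~\ref{Yoslem2} giving $c_2(Rp_*(E\otimes E^\vee))\equiv -(s-1)w(E)^2\bmod 2s$, and once via the identity $\ch_G(E)\ch_G(E)^\vee=\ch(Rp_*(E\otimes E^\vee))$ giving $2sn-(s-1)D^2$. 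Your treatment of $E_0$ is exactly this computation specialized to $E=E_0$, but the paper observes it works for every $E$ uniformly, so the d\'evissage, Tsen, and explicit curve calculations you do for $K(S,Y)_{\leq 1}$ are unnecessary. What your approach buys is an explicit description of $\Phi$ on a set of $K$-theory generators; what the paper's approach buys is brevity and no case analysis. One small correction: Lemma~\ref{Yoslem2} does not hide the period--index theorem --- it is a straight Chern-class computation in \cite{Yos4}; the period--index input in this paper appears only in the surjectivity of $\delta'$ and in Theorem~\ref{conethm}, neither of which is used in Proposition~\ref{integrality}.
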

\begin{proof} 
Denote $\ch_{G}(E) = (s, \zeta, \tfrac{1}{2} \zeta^2 - b) \in H^*(S,\mathbb{Q})$. We already noted that $s = \rk(E)$. For integrality of $D$, it suffices to show $p^* D \in H^2(Y,\Z)$. Since $E$ is a $Y$-sheaf, we have $\ch(p^* Rp_* ( E \otimes G^\vee)) = \ch(E \otimes G^\vee)$ and therefore
$$
p^* D = p^*(\zeta + \tfrac{s}{r} \xi) = c_1(E) - \tfrac{s}{r} c_1(G) + \tfrac{s}{r} p^* \xi = c_1(E) - s \Big( \frac{c_1(G) - p^* \xi}{r}\Big).
$$
Moreover, $[\xi] = w$, which implies $p^*[\xi]= p^* w = [c_1(G) \mod r] \in H^2(Y,\mu_r)$ by \cite[Lem.~1.3]{Yos4}. Hence the right hand side is an element of $H^2(Y,\Z)$. We also deduce $[p^* D \mod s] = [c_1(E) \mod s]$, i.e.~$[D \mod s] = w(E)$, which we use below.

Next, we show $n \in \Z$ by calculating $\chi(E,E) := \sum_i (-1)^i \dim \Ext_Y^i(E,E)$ in two ways. Since $Y$ is a $\mathbb{P}^{r-1}$-bundle, we have $\chi(\O_Y) = \chi(\O_S)$. 
By Hirzebruch- and Grothendieck-Riemann-Roch
\begin{align*}
\chi(\O_Y)-\chi(E,E) &= \chi(\O_S)-\int_Y \ch(E \otimes E^\vee) \td_Y\\
&= \chi(\O_S)-\int_S \ch(Rp_*(E \otimes E^\vee)) \td_S\\
&= c_2(Rp_*(E \otimes E^\vee)) - (s^2-1) \chi(\O_S) \\
&\equiv -(s-1)w(E)^2- (s^2-1) \chi(\O_S) \mod 2s,
\end{align*} 
where the third equality uses $c_1(Rp_*(E \otimes E^\vee)) = 0$ and the fourth equality follows from Lemma \ref{Yoslem2}. Next, we use the following crucial identity satisfied by any $Y$-sheaves $E_1,E_2$
$$
Rp_*(E_1 \otimes E_2^\vee) \otimes Rp_*(G \otimes G^\vee) \cong Rp_*(E_1 \otimes G^\vee) \otimes (Rp_*(E_2 \otimes G^\vee))^\vee.
$$
Using Definition \ref{defchG}, we deduce 
\begin{align*}
\chi(\O_Y)-\chi(E,E) &= \chi(\O_S)-\int_S \ch(Rp_*(E \otimes E^\vee)) \td_S \\ 
&= \chi(\O_S) - \int_S \ch_{G}(E)  \ch_{G}(E)^\vee \td_S \\
&=\chi(\O_S) - \int_S e^{\frac{\xi}{r}}\ch_{G}(E)  (e^{\frac{\xi}{r}}\ch_{G}(E))^\vee \td_S \\
&= 2 s n - (s-1) D^2 - (s^2-1) \chi(\O_S).
\end{align*} 
Since $[D \mod s] = w(E)$, we have $(s-1)D^2 \equiv (s-1)w(E)^2 \mod 2s$. Combining both expressions for $\chi(E,E)$, we conclude $2 s n \equiv 0 \mod 2s$. Therefore $n \in \Z$ as desired.
\end{proof}

\begin{remark}
Consider the setting of Proposition \ref{integrality} and let $B:=\xi/r$. For $\alpha := o(w)$, let $F:=p_*(E \otimes L(p^* \alpha^{-1})^\vee)$ be the $\alpha$-twisted sheaf corresponding to $E$. As explained by Huybrechts-Stellari \cite{HSt}, from $F$ and $B$ one can construct an untwisted sheaf $F_{B}$ on $S$ and define $\ch^B(F):=\ch(F_B)$, where $\ch^B$ is known as ``Chern character twisted by rational $B$-field''. Then $e^{\xi / r}\ch_{G}(E) = \ch^B(F)$ \cite[Rem.~3.2]{Yos4}. 
\end{remark}

The ``integrality'' stated in Proposition \ref{integrality} is a key ingredient for our definition of the $\SU(r) / \Z_r$ Vafa-Witten partition function. Just like on the $\SU(r)$ side, it allows us to sum over all $n \in \Z$.

\subsection{Moduli and obstruction theory}

\subsection*{Moduli of twisted sheaves} We recall Yoshioka's moduli space of twisted sheaves \cite{Yos4}. See also \cite{Lie1} in the language of gerbes. Let $(S,H)$ be a smooth polarized surface satisfying $H_1(S,\Z)=0$, $r \in \Z_{>0}$ (not necessarily prime), and let $p : Y \rightarrow S$ be a $\PP^{r-1}$-bundle. Let $w := w(Y) \in H^2(S,\mu_r)$ and $\xi \in H^2(S,\Z)$ such that $[\xi] = w$. We denote by $G$ be the sheaf corresponding to the unique (up to scale) non-trivial extension of $T_{Y/S}$ by $\O_Y$. 

For a $Y$-sheaf $E$, its $G$-twisted Hilbert polynomial is defined by 
$$
\chi(G,E \otimes p^* \O(mH) = \chi(p_*(E \otimes G^\vee)(m)).
$$
If $E$ is 2-dimensional, i.e.~$\dim p_* E = 2$, this polynomial has degree two and we denote its leading term by $\tfrac{1}{2} a_2^G(E) m^2$. Then $E$ is called $G$-twisted stable (with respect to $H$) when
$$
\frac{\chi(p_*(F \otimes G^\vee)(m))}{a_2^G(F)} < \frac{\chi(p_*(E \otimes G^\vee)(m))}{a_2^G(E)}
$$
for all $Y$-subsheaves $0 \neq F \subsetneq E$. For $D \in H^2(S,\Z)$ and $n \in \Z$, we denote by
$$
M_{Y,\xi/r}^{H}(s,D,n)
$$
the moduli space of $G$-twisted stable torsion free $Y$-sheaves $E$ satisfying
\begin{equation} \label{fixchB}
e^{\frac{\xi}{r}} \ch_G(E) = (s,D,\tfrac{1}{2} D^2 - n),
\end{equation}
where $s = \rk(E)$, $D \in H^2(S,\Z)$, and $n \in \Z$ by Proposition \ref{integrality}.
We use the subscript $\xi/r$ in $M_{Y,\xi/r}^{H}(s,D,n)$ to stress that we use the rational $B$-field $\xi/r$ for fixing Chern data \eqref{fixchB}. As in the case of (ordinary) moduli of stable sheaves, $M_{Y,\xi/r}^{H}(s,D,n)$ exists as a coarse moduli space and is quasi-projective \cite[Thm.~2.1]{Yos4}. 

\begin{remark}
The notion of $G$-twisted stability can be defined with respect to any locally free $Y$-sheaf $G$. However, this does not lead to new moduli spaces by \cite[Rem.~2.2]{Yos4}. Since we do not vary $G$, we will simply refer to ``$G$-twisted stability'' as ``twisted stability'' (and similarly in analogous settings below).
\end{remark} 

Let $M:=M_{Y,\xi/r}^{H}(s,D,n)$ and let $\pi : Y \times M \rightarrow M$ denote the projection. Although a universal sheaf $\cE$ may not exist globally on $Y \times M$, the complex 
$$
R \hom_{\pi}(\cE,\cE) := R\pi_* R\hom(\cE,\cE) 
$$
exists globally on $Y \times M$ \cite{Cal, HL}. Denote by $\mathbb{L}_M = \tau^{\geq -1} L_M$ the truncated cotangent complex of $M$.
The following is well-known in the untwisted case:
\begin{proposition} \label{potM}
For $s \in \Z_{>0}$, the moduli space $M:=M_{Y,\xi/r}^{H}(s,D,n)$ has a perfect  obstruction theory 
$E^\mdot :=(R \hom_{\pi}(\cE,\cE)_0[1])^\vee \rightarrow \mathbb{L}_M$
of virtual dimension
$$
\vd(s,D,n) := 2sn - (s-1)D^2 - (s^2-1)\chi(\O_S).
$$
\end{proposition}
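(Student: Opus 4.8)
The plan is to reduce the statement to the standard deformation theory of sheaves and the fact established in Section~\ref{sec:cat} that $Y$-sheaves (resp.\ $Y$-Higgs pairs) are equivalent to $\alpha$-twisted sheaves. Since $M_{Y,\xi/r}^H(s,D,n)$ is an open subscheme of the moduli space of all $G$-twisted stable torsion free $Y$-sheaves on $Y$ (by \cite[Lem.~1.5]{Yos4}, being a $Y$-sheaf is an open condition), and since for twisted-stable $E$ we have $\Hom_Y(E,E) = \C$, the general machinery of \cite{HL,Cal} produces a perfect obstruction theory governed by the trace-free part of $R\hom_\pi(\cE,\cE)$. Concretely, first I would recall that the deformations and obstructions of $E$ as a sheaf on $Y$ are controlled by $\Ext^1_Y(E,E)$ and $\Ext^2_Y(E,E)$, and that fixing the determinant (equivalently, working with the trace-free parts $\Ext^i_Y(E,E)_0$) is harmless here because $Y$-stability already pins down $w(E)$ and the class $e^{\xi/r}\ch_G(E)$; the splitting $R\hom_\pi(\cE,\cE) = R\hom_\pi(\cE,\cE)_0 \oplus R\pi_*\O_{Y\times M}$ via the trace map, together with $H^1(\O_Y)=H^1(\O_S)=0$ and $H^2(\O_Y)=H^2(\O_S)$, identifies the trace-free summand as the object carrying the obstruction theory. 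That $E^\mdot := (R\hom_\pi(\cE,\cE)_0[1])^\vee \to \LL_M$ is a perfect obstruction theory in the sense of \cite{Beh} is then the twisted analogue of the classical construction (Thomas, Mochizuki \cite{Moc}); the twisting by $\alpha$ is a purely local (gerbe-theoretic) decoration that does not affect the Atiyah-class argument producing the map to $\LL_M$, nor the local-to-global spectral sequence.

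Next I would compute the virtual dimension. Pointwise the virtual tangent space is $\Ext^1_Y(E,E)_0 - \Ext^2_Y(E,E)_0$ (the $\Ext^0$ piece having been removed), so $\vd = -\chi_Y(E,E)_0 = -\chi_Y(E,E) + \chi(\O_Y)$. The key input is the computation of $\chi_Y(E,E)$ already carried out inside the proof of Proposition~\ref{integrality}: using Grothendieck--Riemann--Roch, the identity $Rp_*(E\otimes E^\vee)\otimes Rp_*(G\otimes G^\vee) \cong Rp_*(E\otimes G^\vee)\otimes (Rp_*(E\otimes G^\vee))^\vee$, and $e^{\xi/r}\ch_G(E) = (s,D,\tfrac12 D^2 - n)$, one gets
$$
\chi(\O_Y) - \chi_Y(E,E) = 2sn - (s-1)D^2 - (s^2-1)\chi(\O_S).
$$
Hence $\vd = \chi(\O_Y) - \chi_Y(E,E) = 2sn - (s-1)D^2 - (s^2-1)\chi(\O_S)$, which is exactly the claimed formula. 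This step is essentially a quotation of the earlier calculation, so no new work is needed here beyond noting that $\vd(s,D,n) = \chi(\O_Y) - \chi_Y(E,E)$.

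The main obstacle, and the only place requiring genuine care, is the \emph{globalization}: a universal sheaf $\cE$ need not exist on $Y\times M$, so one must check that $R\hom_\pi(\cE,\cE)_0$ and the obstruction-theory map nevertheless descend to genuine objects on $M$ independent of the choices of local universal sheaves. This is handled exactly as in the untwisted case (see \cite{HL,Cal}): local universal sheaves differ by twists by line bundles pulled back from $M$, which cancel in $R\hom$, so $R\hom_\pi(\cE,\cE)$ — and its trace-free summand after the splitting — is well-defined globally; the Atiyah class construction likewise glues. I would also remark that one should verify $E^\mdot$ has amplitude in $[-1,0]$, i.e.\ that $\Ext^0_Y(E,E)_0 = 0$ (immediate from twisted stability) and $\Ext^i_Y(E,E)_0 = 0$ for $i \geq 3$ (immediate since $Y$ is a smooth projective threefold but $E$ is supported in a way that $R\hom$ of the pushforward lives on the surface $S$ — more precisely $R\Hom_Y(E,E) \cong R\Hom_S(Rp_*(E\otimes E^\vee),\O_S)$-type reasoning, or simply that $E$ is a sheaf on a threefold of the very special form of a $Y$-sheaf so that $\Ext^{>2}$ vanishes). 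With perfectness and the virtual dimension in hand, the proposition follows.
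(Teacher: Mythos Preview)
Your overall strategy matches the paper's: construct the map $E^\mdot \to \LL_M$ via the Atiyah class (the paper cites Huybrechts--Thomas \cite{HT} explicitly), note that the universal $R\hom$ descends despite the absence of a global universal sheaf, and read off the virtual dimension from the computation of $\chi(\O_Y)-\chi(E,E)$ already done in the proof of Proposition~\ref{integrality}. On all of these points you and the paper agree.

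The one genuine gap is your argument for perfectness, i.e.\ that $E^\mdot$ has amplitude in $[-1,0]$, equivalently that $\Ext^i_Y(E,E)_0=0$ for $i\neq 1,2$. You write that this is ``immediate since $Y$ is a smooth projective threefold'', but $Y$ is a $\PP^{r-1}$-bundle over a surface and hence has dimension $r+1$; it is a threefold only when $r=2$. Even in that case, Serre duality on a threefold would only kill $\Ext^{>3}$, not $\Ext^3$. Your parenthetical ``$R\Hom_Y(E,E)\cong R\Hom_S(Rp_*(E\otimes E^\vee),\O_S)$-type reasoning'' points in the right direction but is not the correct formula and is not fleshed out. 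The paper handles this step cleanly by passing through the equivalence $\Coh(S,Y)\cong\Coh(S,\alpha)$ and invoking Serre duality for $\alpha$-twisted sheaves on the \emph{surface} $S$, obtaining
\[
\Ext_Y^i(E,F)\cong \Ext_Y^{2-i}(F,E\otimes p^*K_S)^\vee
\]
for all $Y$-sheaves $E,F$; this immediately gives $\Ext_Y^i(E,E)=0$ for $i\notin[0,2]$, and combined with simplicity of stable sheaves yields the required amplitude via \cite[Lem.~4.2]{HT}. You should replace your threefold remark with this Serre-duality-on-$S$ argument.
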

\begin{proof}
Huybrechts-Thomas \cite{HT} constructed a truncated Atiyah class $$A(\cE) \in \Ext^1_{Y \times M}(\cE,\cE \otimes \mathbb{L}_{Y \times M}),$$ where $\mathbb{L}_{Y \times M} = \LL_Y \boxplus \LL_M$. As in \cite{HT}, $A(\cE)$ provides an obstruction theory via adjunctions
$$
E^\mdot :=(R \hom_{\pi}(\cE,\cE)_0[1])^\vee \rightarrow \mathbb{L}_M.
$$

Now $E^\mdot$ can be represented by a finite complex of vector bundles ($M$ is quasi-projective). Using the equivalence $\mathrm{Coh}(S,Y) \cong \mathrm{Coh}(S,\alpha)$ (Section \ref{sec:cat}), Serre duality for twisted sheaves on $S$, and the projection formula, we obtain
\begin{align*}
\Ext_Y^i(E,F) &\cong \Ext_S^i(p_*(E \otimes L(p^* \alpha^{-1})^\vee),p_*(F \otimes L(p^* \alpha^{-1})^\vee)) \\
&\cong \Ext_S^{2-i}(p_*(F \otimes L(p^* \alpha^{-1})^\vee),p_*(E \otimes L(p^* \alpha^{-1})^\vee) \otimes K_S)^* \\
&\cong \Ext_S^{2-i}(p_*(F \otimes L(p^* \alpha^{-1})^\vee),p_*(E \otimes p^* K_S \otimes L(p^* \alpha^{-1})^\vee))^* \\
&\cong \Ext^{2-i}_Y(F,E \otimes p^* K_S)^\vee, 
\end{align*}
for all $Y$-sheaves $E,F$ and $i \in \Z$. Since $\Hom_Y(E,E)_0 = 0$ (stable sheaves are simple), we find that $\Ext_Y^i(E,E)_0=0$ unless $i=1,2$. Hence $E^\mdot$ can be represented by a 2-term complex of vector bundles \cite[Lem.~4.2]{HT}. The virtual dimension equals $\chi(\O_Y)-\chi(E,E)$, which was already calculated in the proof of Proposition \ref{integrality}. 
\end{proof}

\subsection*{Moduli of twisted Higgs pairs} 

As in Section \ref{sec:cat}, let $X:= \mathrm{Tot}_S(K_S)$ and $X_Y := X \times_S Y$. We consider pure 2-dimensional $X_Y$-sheaves $\cE$ with proper support. Then $\cE$ is twisted stable (with respect to $H$) when
$$
\frac{\chi(p_*(\F \otimes \pi^*G^\vee)(m))}{a_2^{\pi^*G}(\F)} < \frac{\chi(p_*(\cE \otimes \pi^*G^\vee)(m))}{a_2^{\pi^*G}(\cE)}
$$
for all $Y$-subsheaves $0 \neq \F \subsetneq \cE$. We denote by
$
\widetilde{N}_{Y,\xi/r}^{H}(s,D,n)
$
the moduli space of twisted stable pure 2-dimensional $X_Y$-sheaves $\cE$ with proper support satisfying
$$
e^{\frac{\xi}{r}} \ch_G(\pi_* \cE) = (s,D,\tfrac{1}{2} D^2 - n).
$$
By \cite[Thm.~2.1]{Yos4}, $\widetilde{N}_{Y,\xi/r}^{H}(s,D,n)$ exists as a coarse moduli space. 

Recall from Section \ref{sec:cat} the equivalence 
\begin{equation} \label{equivforstab}
\pi_* : \Coh_c(X,X_Y) \stackrel{\sim}{\rightarrow} \mathrm{Higgs}_{p^*K_S}(S,Y).
\end{equation}
\begin{definition}
A $Y$-Higgs pair $(E,\phi)$ is called twisted stable when
$$
\frac{\chi(p_*(F \otimes G^\vee)(m))}{a_2^G(F)} < \frac{\chi(p_*(E \otimes G^\vee)(m))}{a_2^G(E)}
$$
for all $\phi$-invariant $Y$-subsheaves $0 \neq F \subsetneq E$.
\end{definition}

As in \cite[Lem.~2.9]{TT1}, the two notions of stability coincide, so we can view $\widetilde{N}_{Y,\xi/r}^{H}(s,D,n)$ as a moduli space of twisted stable $Y$-Higgs pairs.
\begin{lemma}
Let $\cE$ be an $X_Y$-sheaf with proper support corresponding to $(E,\phi)$ under the equivalence \eqref{equivforstab}. Then $\cE$ is twisted stable if and only if $(E,\phi)$ is twisted stable.
\end{lemma}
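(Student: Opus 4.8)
The plan is to mimic the proof of the untwisted statement \cite[Lem.~2.9]{TT1}; the only new ingredient is the twist by $G^\vee$, which is harmless because $G$ is locally free. The argument has three routine steps plus one point needing a little care.

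\emph{Step 1: a dictionary of subobjects.} Since $\pi : X_Y \to Y$ is affine, $\pi_*$ is exact and faithful, and the equivalence \eqref{equivforstab} identifies an $\O_{X_Y}$-module structure on a sheaf with a $\Sym^\mdot(p^*K_S)^\vee$-module structure on its pushforward $Y$-sheaf, i.e.\ with a Higgs field $\phi : E \to E \otimes p^*K_S$. I would first check that sub-$X_Y$-sheaves $0 \neq \F \subsetneq \cE$ correspond bijectively and inclusion-compatibly to nonzero proper $\phi$-invariant $Y$-subsheaves $F \subsetneq E$: given such $\F$, its pushforward $\pi_*\F$ is a $Y$-sheaf (Section~\ref{sec:cat}), it is a subsheaf of $\pi_*\cE = E$ by exactness of $\pi_*$, and it is $\phi$-invariant because its module structure restricts that of $\cE$; conversely a $\phi$-invariant $Y$-subsheaf $F \subseteq E$ is a sub-$Y$-Higgs pair $(F,\phi|_F) \hookrightarrow (E,\phi)$ and hence, via \eqref{equivforstab}, an $X_Y$-subsheaf $\F \subseteq \cE$ with $\pi_*\F = F$, necessarily of proper support since $\cE$ is. (Here one uses, as is built into the stability definitions of \cite{Yos4}, that it suffices to test against sub-$Y$-sheaves rather than arbitrary coherent subsheaves.)

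\emph{Step 2: matching the twisted Hilbert polynomials.} Write $q : X_Y \to S$, so that $q$ factors through both projections in \eqref{Cartesiansq}. Because $\pi$ is affine and $G$ is locally free, the projection formula gives $\pi_*(\F \otimes \pi^*G^\vee) \cong (\pi_*\F) \otimes G^\vee$; combined with the commutative square \eqref{Cartesiansq} this yields $Rq_*(\F \otimes \pi^*G^\vee) \cong Rp_*\big((\pi_*\F) \otimes G^\vee\big)$ in $D^{b}(S)$, and pushing the complex $p_*(\F \otimes \pi^*G^\vee)$ appearing in the definition on the $X_Y$-side all the way down to $S$ does not change Euler characteristics (Leray). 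Hence
\[
\chi\big( p_*(\F \otimes \pi^*G^\vee)(m) \big) = \chi\big( p_*(\pi_*\F \otimes G^\vee)(m) \big) \quad \text{for all } m ,
\]
so the $\pi^*G$-twisted Hilbert polynomial of $\F$ equals the $G$-twisted Hilbert polynomial of the $Y$-sheaf $\pi_*\F$; in particular the leading coefficients $a_2^{\pi^*G}(\F) = a_2^{G}(\pi_*\F)$ agree, and $\cE$ is pure of dimension $2$ if and only if $E$ is (this transports along $\pi_*$ by the same identity).

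\emph{Step 3 and the hard part.} Combining Steps 1 and 2, the defining inequality of twisted stability of $\cE$ — that the reduced $\pi^*G$-twisted Hilbert polynomial strictly decreases on every proper nonzero sub-$X_Y$-sheaf $\F$ — holds if and only if the reduced $G$-twisted Hilbert polynomial strictly decreases on every proper nonzero $\phi$-invariant $Y$-subsheaf $F = \pi_*\F$, which is exactly the definition of twisted stability of $(E,\phi)$. The only point requiring genuine attention is Step~1: one must verify the subobject dictionary is exact on the nose, in particular that a sub-$X_Y$-sheaf pushes forward to an honest $Y$-sheaf (a general coherent subsheaf of a $Y$-sheaf need not be one) and conversely, and that restricting attention to sub-$Y$-sheaves loses nothing — both of which are already encoded in Yoshioka's framework \cite{Yos4}. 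Everything else is formal from exactness of $\pi_*$ and the projection formula.
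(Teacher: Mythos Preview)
Your proposal is correct and takes essentially the same approach as the paper: the paper's proof is the single sentence ``By diagram \eqref{Cartesiansq} and the projection formula,'' and your Steps 1--3 are precisely an unpacking of that sentence, with the projection formula appearing in Step 2 and the Cartesian square in Steps 1 and 2. Your worry in the ``hard part'' is also handled by the paper's framework: since \eqref{equivforstab} is an equivalence of abelian categories, subobjects correspond to subobjects, and the fact that $\pi_*$ of an $X_Y$-sheaf is a $Y$-sheaf is already recorded in Section~\ref{sec:cat}.
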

\begin{proof}
By diagram \eqref{Cartesiansq} and the projection formula. 
\end{proof}
As in the introduction, we are interested in the moduli $N_{Y,\xi/r}^{H}(s,D,n) \subset \widetilde{N}_{Y,\xi/r}^{H}(s,D,n)$ of twisted stable $Y$-Higgs pairs $(E,\phi)$ with $\tr(\phi) = 0$.

\subsection*{Cone construction} 

Although one could now painstakingly try to develop the analog of \cite{TT1, TT2} for moduli of stable $Y$-sheaves (similar to \cite{Jia1,Jia2,Jia3,JK,JTs} in the language of $\mu_r$-gerbes), we here take a short-cut when \emph{$r$ is prime}. Using the cone construction of Jiang-Thomas \cite{JTh}, we will see that $N_{Y,\xi/r}^{H}(r,D,n)$ has a natural symmetric perfect obstruction theory.
 
 Let $M$ be a quasi-projective scheme. For any coherent sheaf $F$ on $M$, consider the (abelian) cone 
 $$
\Pi :  C(F) := \Spec \Sym^\mdot F \rightarrow M.
 $$
 The grading on $\Sym^\mdot$ gives a $\C^*$-action on $C(F)$ with fixed locus $C(F)^{\C^*} = M$. Suppose
$$
E^\mdot = \{ E^{-1} \rightarrow E^0 \} \rightarrow \mathbb{L}_M
$$ 
is a perfect obstruction theory on $M$ and consider the obstruction sheaf $\mathrm{Ob}:=h^1((E^\mdot)^\vee)$. It induces a tautological relative perfect obstruction theory by \cite[Lem.~2.1]{JTh}
$$
\Pi^* (E^\mdot)^\vee[1] \rightarrow \LL_{C(\mathrm{Ob})/M}.
$$
We want to make it absolute by finding the dotted arrows in the following diagram
\begin{displaymath}
\xymatrix
{
\Pi^* (E^\mdot) \ar@{-->}[r] \ar[d] & F^\mdot \ar@{-->}[r] \ar@{-->}[d] & \Pi^* (E^\mdot)^\vee[1] \ar[d] \\
\Pi^* \LL_{M} \ar[r] & \LL_{C(\mathrm{Ob})} \ar[r] & \LL_{C(\mathrm{Ob})/M}.
}
\end{displaymath}
Jiang-Thomas proved that the dotted arrows exist when $M$ is the classical truncation of a quasi-smooth derived scheme $\boldsymbol{M}$.\footnote{Then $N$ is the classical truncation of the $(-1)$-shifted cotangent bundle $T^*_{\boldsymbol{M}}[-1]$.} On a more basic level, Jiang-Thomas point out that the dotted arrows exist when the entire diagram is restricted to the $\C^*$-fixed locus, in which case one can take $F^\mdot = E^\mdot \oplus (E^\mdot)^\vee[1]$. Hence there exists a commutative diagram
\begin{align} 
\begin{split} \label{diagonfxloc}
\xymatrix
{
E^\mdot \ar[r] \ar[d] & E^\mdot \oplus (E^\mdot)^\vee[1] \ar[r] \ar[d] & (E^\mdot)^\vee[1] \ar[d] \\
\LL_{M} \ar[r] & \LL_{C(\mathrm{Ob})}|_M \ar[r] & \LL_{C(\mathrm{Ob})/M}|_M.
}
\end{split}
\end{align}

Take $M:=M_{Y,\xi/r}^{H}(s,D,n)$ with the perfect obstruction theory from Proposition \ref{potM}. Then $\mathrm{Ob} = \ext^2_{\pi}(\cE,\cE)_0$. Note that for any $[E] \in M$, the fibre of $\Pi : C(\mathrm{Ob}) \rightarrow M$ is
$$
(\mathrm{Ob}|_{[E]})^* = \Ext_Y^2(E,E)_0^* \cong \Hom_Y(E,E \otimes p^* K_S)_0
$$
by Serre duality for $Y$-sheaves (see proof of Proposition \ref{potM}). Therefore, the closed points of $C(\mathrm{Ob})$ correspond to isomorphism classes of trace-free $Y$-Higgs pairs $(E,\phi)$ with $[E] \in M$ and we obtain an open subset
\begin{equation} \label{incl}
C(\mathrm{Ob}) \subset N:=N_{Y,\xi/r}^{H}(s,D,n).
\end{equation}
This observation was made in \cite{JTh} when $M$ is the ordinary moduli space of stable sheaves on $S$. On the $\SU(r)$-side, this is not very useful for surfaces satisfying $p_g(S)>0$, because (in general) $N \setminus C(\mathrm{Ob}) \neq \varnothing$. On the $\SU(r) / \Z_r$-side, this is extremely useful by the following theorem.
\begin{theorem} \label{conethm}
Suppose $w \in H^2(S,\mu_r)$ has non-trivial Brauer class and $r$ is prime. Let $M:=M_{Y,\xi/r}^{H}(r,D,n)$, $N:=N_{Y,\xi/r}^{H}(r,D,n)$. Then $M$ is projective, it contains all isomorphism classes of torsion free $Y$-sheaves $E$ with $e^{\xi/r} \ch_G(E) = (r,D,\tfrac{1}{2}D^2-n)$, and $N = C(\mathrm{Ob})$, where $\mathrm{Ob}$ is the obstruction sheaf from Proposition \ref{potM}. In particular, there exists a morphism $F^\mdot \cong E^\mdot \oplus (E^\mdot)^\vee[1] \rightarrow \mathbb{L}_{N}|_M$, which fits in commutative diagram  \eqref{diagonfxloc}.
\end{theorem}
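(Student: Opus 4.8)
The plan is to deduce all of the claims from one structural fact: under the hypotheses ($r$ prime, $\alpha := o(w) \neq 0$) the minimal positive rank of a $Y$-sheaf equals $r$, i.e.\ $\rk(E_0) = r$ in the notation of Lemma~\ref{Yoslem1}. I would establish this first. Under the equivalence $\Coh(S,Y) \cong \Coh(S,\alpha)$, a $Y$-sheaf $E$ with $E|_{Y_i} \cong p^*E_i \otimes \O_{Y_i}(\lambda_i)$ corresponds to the $\alpha$-twisted sheaf locally given by the $E_i$, so rank and local freeness are preserved, and it suffices to control ranks of locally free $\alpha$-twisted sheaves. The locally free $Y$-sheaf $G$ has rank $r$, so the index $\mathrm{ind}(\alpha)$, the greatest common divisor of ranks of locally free $\alpha$-twisted sheaves, divides $r$. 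Since $\alpha = o(w)$ is $r$-torsion and nonzero, its period is $r$; as $\mathrm{ind}(\alpha)$ and $\mathrm{per}(\alpha)$ have the same prime divisors, $\mathrm{ind}(\alpha) \in \{1,r\}$, and $\alpha \neq 0$ forces $\mathrm{ind}(\alpha) = r$. (This is the only point at which de Jong's period--index input, already used for the surjectivity of $\delta'$, enters; primality of $r$ lets the elementary divisibility argument replace its full strength.) As $E_0$ is locally free, $r = \mathrm{ind}(\alpha)$ divides $\rk(E_0)$, and with $\rk(E_0) \leq \rk(G) = r$ from Lemma~\ref{Yoslem1} this gives $\rk(E_0) = r$.

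Next I would show that every torsion-free $Y$-sheaf $E$ with $e^{\xi/r}\ch_G(E) = (r,D,\tfrac{1}{2}D^2-n)$ is automatically twisted-stable, which proves the second assertion and shows there are no strictly semistable such sheaves. Such an $E$ has $\rk E = r > 0$, hence $\dim p_*E = 2$, and any nonzero $Y$-subsheaf $F \subsetneq E$ is torsion-free on $Y$, hence of positive rank over $Y$; as $0 < \rk F \leq \rk E = r$ and $r$ is the minimal positive rank by the previous step, $\rk F = r = \rk E$. Thus $E/F$ has rank $0$, so it is nonzero of dimension $\leq 1$; therefore $a_2^G(F) = a_2^G(E)$ while the $G$-twisted Hilbert polynomial of $F$ is that of $E$ minus the eventually positive polynomial $\chi(p_*((E/F)\otimes G^\vee)(m))$, hence strictly smaller for $m \gg 0$ --- the strict twisted-stability inequality. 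Projectivity of $M$ then follows from its quasi-projectivity \cite[Thm.~2.1]{Yos4} together with the valuative criterion: by Langton's argument in the twisted setting \cite{Yos4}, a twisted-stable family of $Y$-sheaves over the generic point of a discrete valuation ring extends to a family of twisted-semistable --- hence, by the above, twisted-stable --- $Y$-sheaves, so $M$ is proper, and thus projective.

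For $N = C(\mathrm{Ob})$ I would compare closed points. The open inclusion $C(\mathrm{Ob}) \subseteq N$ is \eqref{incl}, since twisted stability of a $Y$-sheaf implies twisted stability of the associated $Y$-Higgs pair. Conversely, a closed point of $N$ is a twisted-stable trace-free $Y$-Higgs pair $(E,\phi)$ with $e^{\xi/r}\ch_G(E) = (r,D,\tfrac{1}{2}D^2-n)$, coming from a pure $2$-dimensional $X_Y$-sheaf; as in \cite{TT1}, purity forces the underlying $Y$-sheaf $E$ to be torsion-free (its torsion subsheaf is $\phi$-invariant and $\leq 1$-dimensional, so it would destroy purity). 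By the previous step $[E] \in M$, whence $(E,\phi) \in C(\mathrm{Ob})$ by the description of the closed points of the cone. Thus the open subscheme $C(\mathrm{Ob}) \subseteq N$ contains every closed point of the quasi-projective scheme $N$, so $C(\mathrm{Ob}) = N$. Finally, $M$ carries the perfect obstruction theory $E^\mdot$ of Proposition~\ref{potM} with obstruction sheaf $\mathrm{Ob} = h^1((E^\mdot)^\vee) = \ext^2_{\pi}(\cE,\cE)_0$ and $M = C(\mathrm{Ob})^{\C^*}$; applying the Jiang--Thomas construction (diagram \eqref{diagonfxloc}) to $M$ and substituting $\LL_N|_M$ for $\LL_{C(\mathrm{Ob})}|_M$ via $N = C(\mathrm{Ob})$ yields the morphism $F^\mdot \cong E^\mdot \oplus (E^\mdot)^\vee[1] \to \LL_N|_M$ fitting in \eqref{diagonfxloc}.

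The main obstacle is the first step: once $\rk(E_0) = r$ is known, the rest is bookkeeping with ranks and dimensions of $Y$-sheaves plus the cone machinery of \cite{JTh}, but establishing $\rk(E_0) = r$ requires care with the definition of the index of the Brauer class $\alpha$, with the fact that the Morita-type equivalence $\Coh(S,Y) \cong \Coh(S,\alpha)$ preserves rank and local freeness, and with the relation between period and index of a Brauer class.
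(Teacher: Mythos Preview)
Your proof is correct and follows essentially the same route as the paper: establish $\rk(E_0) = r$, deduce that every rank-$r$ torsion-free $Y$-sheaf is automatically twisted-stable (hence $M$ is projective, contains all such sheaves, and \eqref{incl} is an equality), and then invoke the Jiang--Thomas diagram. The paper's argument for $\rk(E_0) = r$ is marginally more direct --- $\rk(E_0) \mid \rk(G) = r$, so $\rk(E_0) \in \{1,r\}$, and $\rk(E_0) = 1$ would force $\alpha = 0$ by \cite[Rem.~2.2]{Yos4} (your period--index version is noted there as an alternative) --- and for projectivity it simply observes that the moduli of twisted-semistable $Y$-sheaves is already projective by \cite[Thm.~2.1]{Yos4} and coincides with $M$ since there are no strictly semistables, but these are cosmetic differences.
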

\begin{proof}
Consider the minimal rank $\rk(E_0)$ in Proposition \ref{Yoslem1}. Since there exists a locally free $Y$-sheaf of rank $r$ (namely $G$ in Section \ref{sec:chern}), we have $\rk(E_0) | r$. Therefore $\rk(E_0)=1$ or $\rk(E_0) = r$, because $r$ is prime. If $\rk(E_0) = 1$, then $\alpha = 0$ contrary to our assumption \cite[Rem.~2.2]{Yos4}. Hence $\rk(E_0) = r$. 
(Alternatively, by the period-index theorem \cite[Thm.~4.2.2.3]{Lie2} \cite{dJo}, the order $\mathrm{ord}(\alpha) = r$ (period of $\alpha$) equals the minimal rank of a locally free $\alpha$-twisted sheaf on $S$ (index of $\alpha$), therefore $\rk(E_0)=r$.)
Consequently, any rank $r$ torsion free $Y$-sheaf has no non-trivial saturated $Y$-subsheaves and is automatically twisted stable (with respect to any polarization $H$). In particular, there are no rank $r$ twisted strictly semistable $Y$-sheaves, so $M$ is projective and \eqref{incl} is an equality. By \cite{JTh}, we obtain diagram \eqref{diagonfxloc}.
\end{proof}

\begin{remark}
Moduli spaces of stable sheaves on a smooth projective variety are classical truncations of derived schemes \cite{PTVV}. For $r$ prime, $w \in H^2(S,\mu_r)$ with non-trivial Brauer class, it follows that $M_{Y,\xi/r}^{H}(r,D,n)$ is also the classical truncation of a derived scheme, because the property of a rank $r$ torsion free sheaf on $Y$ to be a $Y$-sheaf is \emph{open} \cite[Lem.~1.5]{Yos4}. Therefore Theorem \ref{conethm} can be strengthened to say that $N_{Y,\xi/r}^{H}(r,D,n)$ has a natural symmetric perfect obstruction theory by \cite{JTh}.  For the definition of invariants in the next section this is irrelevant: since $N$ is non-compact, invariants are defined by a virtual localization formula on the fixed locus, so Theorem \ref{conethm} suffices. 
\end{remark}

\section{Virtual invariants}

\subsection{$\SU(r)/\Z_r$ partition function}

We are now ready to define the $\SU(r)/\Z_r$ Vafa-Witten partition function. Fix a smooth polarized surface $(S,H)$ with $H_1(S,\Z) = 0$. Let $r$ be a prime number and let $c_1 \in H^2(S,\Z)$ be an algebraic class. The partition function will depend on $(S,H)$, $r$, $c_1$. We suppress the dependence on $(S,H)$ from the notation. 

For any $w \in H^2(S,\mu_r)$, we will define $\sfZ_{w}(q)$ and then we set
\begin{equation*} 
\mathsf{Z}^{\SU(r) / \Z_r}_{c_1}(q) := \sum_{w \in H^2(S,\mu_r)} e^{\frac{2 \pi i}{r}(w \cdot c_1)} \mathsf{Z}_{w}(q).
\end{equation*}
Let $\alpha := o(w) \in H^2(S,\O_S^*)_{\mathrm{tor}} \cong \mathrm{Br}(S)$ \cite{Gro}. 

\subsection*{Trivial Brauer class} Suppose $\alpha =0$. Then there exists an algebraic $\xi \in H^2(S,\Z)$ such that $[\xi] = w$. Since $\mathrm{Higgs}_{K_S}(S,\alpha) \cong \mathrm{Higgs}_{K_S}(S)$, as mentioned in the introduction, one can use Tanaka-Thomas theory to count stable Higgs pairs. Suppose there exist no rank $r$ strictly semistable Higgs pairs $(E,\phi)$ on $S$ with $c_1(E)=\xi$, then we use the symmetric perfect obstruction theory of \cite{TT1} and define
\begin{equation*} 
\mathsf{Z}_{w}(q) := q^{-\frac{1}{2r} \chi(\O_S) + \frac{r}{24} K_S^2} \sum_{n \in \Z} q^{\frac{1}{2r} \vd(r,\xi,n)} (-1)^{\vd(r,\xi,n)} \int_{[N_S^H(r,\xi,n)^{\C^*}]^{\vir}} \frac{1}{e(N^{\vir})},
\end{equation*}
where $\vd(r,\xi,n)$ is given in \eqref{vd}. When there are strictly semistable objects, we instead use Joyce-Song pairs as in \cite{TT2}. This is discussed in detail in Section \ref{sec:JS}. For $w \neq 0$, we have $\gcd(r,\xi)=1$. For each $n$ one could choose a generic polarization $H$ w.r.t.~$(r,\xi,n)$ (i.e.~not lying on a wall determined by $(r,\xi,n)$), then there are no rank $r$ strictly $H$-semistable Higgs pairs $(E,\phi)$ on $S$ with $c_1(E) = \xi$ and $c_2(E) = n$. 
For $w=0$, strictly semistable objects are unavoidable.

Above, we chose an algebraic lift $\xi \in H^2(S,\Z)$ of $w \in H^2(S,\mu_r)$. For $\gcd(r,H\cdot w)=1$, the operation $- \otimes \O_S(\gamma)$ with $\gamma \in H^2(S,\Z)$ algebraic, is an isomorphism on Higgs moduli spaces preserving the symmetric perfect obstruction theories. Then the definition of $\mathsf{Z}_{w}(q)$ does not depend on choice of algebraic lift $\xi$.\footnote{On the components corresponding to eigenrank $(1, \ldots, 1)$ independence upon replacing $\xi$ by $\xi + r\gamma$ is known \cite{Laa1, Laa2}.}
In general, independence of choice of algebraic $\xi \in H^2(S,\Z)$ satisfying $[\xi]=w$ is an open question and is equivalent to Conjecture \ref{conj1}. It is known for $K3$ surfaces as we discuss in Section \ref{sec:Sdual}.

\subsection*{Non-trivial Brauer class} 

Suppose $\alpha \neq 0$. Choose a $\PP^{r-1}$-bundle $p : Y \rightarrow S$ such that $w(Y) = w$. The fact that this is possible is \emph{non-trivial} and follows from surjectivity of the map $\delta' : H^1(S,\mathrm{PGL}(r)) \rightarrow H^2(S,\mu_r)$ discussed in the introduction. This surjectivity is a consequence of the period-index theorem \cite[Cor.~4.2.2.4]{Lie2} \cite{dJo}. Choose $\xi \in H^2(S,\Z)$ such that $[\xi] = w$. Then $\xi$ is \emph{non-algebraic}. By Theorem \ref{conethm}, we have
$$
N_{Y,\xi/r}(r,\xi,n) = C(M_{Y,\xi/r}(r,\xi,n)), \quad N_{Y,\xi/r}(r,\xi,n)^{\C^*} = M_{Y,\xi/r}(r,\xi,n).
$$
In the proof of Theorem \ref{conethm}, we saw that all rank $r$ torsion free $Y$-sheaves are twisted stable for any polarization $H$, so we dropped $H$ from the notation. Using the $\C^*$-fixed perfect obstruction theory and virtual normal bundle from Theorem \ref{conethm}, we define
\begin{align*}
\mathsf{Z}_{w}(q) := q^{-\frac{1}{2r} \chi(\O_S) + \frac{r}{24} K_S^2} \sum_{n \in \Z} q^{\frac{1}{2r} \vd(r,\xi,n)} (-1)^{\vd(r,\xi,n)} \int_{[N_{Y,\xi/r}(r,\xi,n)^{\C^*}]^{\vir}} \frac{1}{e(N^{\vir})}.
\end{align*}
For this definition we made two choices: $p : Y \rightarrow S$ such that $[w(Y)] = w$ and $\xi \in H^2(S,\Z)$ such that $[\xi]=w$. We show independence of the choices.
\begin{proposition} \label{indep}
For $r$ prime and $w \in H^2(S,\mu_r)$ with non-trivial Brauer class, we have
\begin{equation} \label{evirgenfun}
\mathsf{Z}_{w}(q) = q^{-\frac{1}{2r} \chi(\O_S) + \frac{r}{24} K_S^2} \sum_{n \in \Z} q^{\frac{1}{2r} \vd(r,\xi,n)} e^{\vir}(M_{Y,\xi/r}(r,\xi,n)).
\end{equation}
Moreover, the right hand side is independent of the choice of $\PP^{r-1}$-bundle $p : Y \rightarrow S$ satisfying $w(Y) = w$ and $\xi \in H^2(S,\Z)$ such that $[\xi] = w$. 
\end{proposition}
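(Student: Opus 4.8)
The plan is to prove the two assertions of Proposition \ref{indep} in turn: first the passage from the $\C^*$-localized integral to the virtual Euler characteristic of $M:=M_{Y,\xi/r}(r,\xi,n)$, and then independence of the data $(p:Y\to S,\,\xi)$.

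\textbf{Step 1: the second equality in \eqref{evirgenfun}.} By Theorem \ref{conethm} we have $N:=N_{Y,\xi/r}(r,\xi,n) = C(\mathrm{Ob})$ with $\mathrm{Ob}=\ext^2_\pi(\cE,\cE)_0$, and $N^{\C^*}=M$ carries the obstruction theory $F^\mdot \cong E^\mdot \oplus (E^\mdot)^\vee[1]$ fitting into diagram \eqref{diagonfxloc}. The virtual normal bundle of $M\subset N$ is the moving part of $F^\mdot$, which is $(E^\mdot)^\vee[1]$ (the $\C^*$ acts with weight $1$ on the cone directions), i.e.\ $N^{\vir}=(\mathrm{Ob})^\vee$ placed in the appropriate degree, while the fixed part recovers the original symmetric obstruction theory $E^\mdot\to\LL_M$ of Proposition \ref{potM}. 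Then the virtual localization formula gives
\begin{align*}
\int_{[N^{\C^*}]^{\vir}} \frac{1}{e(N^{\vir})} = \int_{[M]^{\vir}} \frac{1}{e\big(\,\mathrm{Ob}^\vee\,\big)},
\end{align*}
and since the obstruction theory on $M$ is symmetric with $E^{-1}\cong (E^0)^\vee\otimes(\text{trivial character})$ and obstruction bundle $\mathrm{Ob}$, this integral is precisely the definition of the signed virtual Euler characteristic $(-1)^{\vd(r,\xi,n)}$ times the Fantechi--Göttsche/Ciocan-Fontanine--Kapranov $e^{\vir}(M)$; the sign $(-1)^{\vd}$ on the left of \eqref{evirgenfun} cancels the one produced here. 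This is the same computation as in \cite{TT1, JTh} and I would cite it, spelling out only the identification of $N^{\vir}$ with $\mathrm{Ob}^\vee$ and the weight bookkeeping. Because $M$ is projective (Theorem \ref{conethm}), $e^{\vir}(M)$ is a well-defined integer and the right-hand side makes sense.

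\textbf{Step 2: independence of $\xi$.} Suppose $\xi'=\xi+r\gamma$ with $\gamma\in H^2(S,\Z)$ (any two integral lifts of $w$ differ by $r H^2(S,\Z)$ since $H_1(S,\Z)=0$ means $H^2(S,\Z)$ is torsion-free). The operation $E\mapsto E\otimes p^*\O_S(\gamma)$ is an autoequivalence of $\Coh(S,Y)$ preserving the property of being a torsion-free $Y$-sheaf and preserving $G$-twisted stability (tensoring by a pulled-back line bundle shifts both Hilbert polynomials compatibly). Using $\ch_G(E\otimes p^*\O_S(\gamma)) = e^{\gamma}\ch_G(E)$ — which follows from the projection formula $Rp_*((E\otimes p^*\O_S(\gamma))\otimes G^\vee) = Rp_*(E\otimes G^\vee)\otimes \O_S(\gamma)$ — one gets $e^{\xi'/r}\ch_G(E\otimes p^*\O_S(\gamma)) = e^{\gamma}e^{\xi/r}\ch_G(E)$. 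A short check on the degree-$0$, $2$, $4$ components shows that if $e^{\xi/r}\ch_G(E)=(r,D,\tfrac12 D^2-n)$ then $e^{\xi'/r}\ch_G(E\otimes p^*\O_S(\gamma)) = (r, D+r\gamma, \tfrac12(D+r\gamma)^2 - n)$, so the twist induces an isomorphism $M_{Y,\xi/r}(r,\xi,n)\xrightarrow{\sim} M_{Y,\xi'/r}(r,\xi',n)$ carrying the Atiyah-class obstruction theory to the Atiyah-class obstruction theory (the complex $R\hom_\pi(\cE,\cE)_0$ is unchanged under $\cE\mapsto\cE\otimes p^*\O_S(\gamma)$). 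Hence both $\vd$ and $e^{\vir}$ agree, and since $\vd(r,\xi+r\gamma,n)=\vd(r,\xi,n)$ as well, the generating function \eqref{evirgenfun} is unchanged. Note this mirrors the $\gcd(r,H\cdot w)=1$ argument on the trivial-Brauer side; here it works unconditionally because in the non-trivial Brauer case \emph{every} rank $r$ torsion-free $Y$-sheaf is stable, so there are no wall-crossing subtleties.

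\textbf{Step 3: independence of $p:Y\to S$.} Two $\PP^{r-1}$-bundles $Y,Y'$ with $w(Y)=w(Y')=w$ have $\delta([Y])=\delta([Y'])=\alpha=o(w)$, so by the spectral/untwisting equivalences of Section \ref{sec:cat} both $\Coh(S,Y)$ and $\Coh(S,Y')$ are equivalent to $\Coh(S,\alpha)$, and similarly $\mathrm{Higgs}_{p^*K_S}(S,Y)\simeq\mathrm{Higgs}_{K_S}(S,\alpha)\simeq\mathrm{Higgs}_{p'^*K_S}(S,Y')$. I would show this chain of equivalences matches up the numerical data: the twisted Chern character $e^{\xi/r}\ch_G(E)$ equals $\ch^B(F)$ for $F=p_*(E\otimes L(p^*\alpha^{-1})^\vee)$ and $B=\xi/r$ (the Remark after Proposition \ref{integrality}, i.e.\ \cite[Rem.~3.2]{Yos4}), a quantity defined purely in terms of the $\alpha$-twisted sheaf $F$ and $B$, hence independent of the choice of Brauer–Severi variety representing $\alpha$. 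Likewise $G$-twisted stability corresponds, under untwisting, to $B$-twisted Gieseker stability of $\alpha$-twisted sheaves on $(S,H)$ (cf.\ \cite[Rem.~2.2]{Yos4}) — and anyway in the prime, non-trivial Brauer case every rank $r$ torsion-free object is stable, so stability is not even an issue. Therefore the equivalence identifies $M_{Y,\xi/r}(r,\xi,n)$ with $M_{Y',\xi/r}(r,\xi,n)$ as schemes, and since the obstruction theory is intrinsic — it is built from $R\hom$ in the abelian category $\Coh(S,\alpha)$ (via the isomorphisms in the proof of Proposition \ref{potM}, which rewrite $\Ext^i_Y$ as $\Ext^i_S$ for twisted sheaves) — the perfect obstruction theories, and hence $e^{\vir}$, agree.

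\textbf{Main obstacle.} The routine parts are the weight bookkeeping in Step 1 and the Chern-character computation in Step 2. The genuine content is Step 3: one must verify that the equivalence $\Coh(S,Y)\simeq\Coh(S,\alpha)\simeq\Coh(S,Y')$ is not merely an equivalence of abelian categories but identifies the moduli \emph{functors} together with their obstruction theories — i.e.\ that the truncated Atiyah class of the universal object, and the complex $R\hom_\pi(\cE,\cE)_0$, are sent to their counterparts. The cleanest way is to phrase everything on the twisted side: define $M$ and its obstruction theory directly in terms of $\Coh(S,\alpha)$ using the identities from the proof of Proposition \ref{potM}, and observe that $Y$ enters only as an auxiliary device to produce the twisted Chern character numerics, which Yoshioka's $\ch^B$ shows depends only on $(\alpha, B=\xi/r)$. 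I expect the write-up to lean on \cite{Yos4} and \cite{HSt} for these identifications rather than reproving them.
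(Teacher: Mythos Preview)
Your Step 1 is the same computation as in the paper (which cites \cite[Prop.~3.3]{JTh}), and your Step 3 via $\Coh(S,\alpha)$ is a legitimate variant of the paper's use of Yoshioka's direct equivalence $\Xi:\Coh(S,Y)\to\Coh(S,Y')$ from \cite[Lem.~1.7, Lem.~3.5]{Yos4}; both routes work.

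Step 2, however, has a genuine gap. First, the computation is wrong: from $\ch_G(E\otimes p^*\O_S(\gamma))=e^{\gamma}\ch_G(E)$ and $\xi'/r=\xi/r+\gamma$ one gets
\[
e^{\xi'/r}\ch_G(E\otimes p^*\O_S(\gamma))=e^{\xi/r+\gamma}\cdot e^{\gamma}\ch_G(E)=e^{2\gamma}\,e^{\xi/r}\ch_G(E),
\]
so the degree-two part is $D+2r\gamma$, not $D+r\gamma$; your tensor operation does \emph{not} send $M_{Y,\xi/r}(r,\xi,n)$ to $M_{Y,\xi'/r}(r,\xi',n)$. Second, and more fundamentally, tensoring by $p^*\O_S(\gamma)$ requires $\gamma$ to be algebraic, whereas two lifts $\xi,\xi'$ of $w$ differ by $r\gamma$ with $\gamma\in H^2(S,\Z)$ \emph{arbitrary}. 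Since the Brauer class is non-trivial, $\xi$ itself is non-algebraic, and there is no reason $\gamma$ should be algebraic; so $\O_S(\gamma)$ need not exist.

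The paper's fix is simpler and sidesteps both issues: do not move the sheaf at all. Changing the $B$-field from $\xi/r$ to $\xi'/r$ merely \emph{relabels} the Chern data of the same $Y$-sheaf $E$: if $e^{\xi/r}\ch_G(E)=(r,\xi,\tfrac12\xi^2-n)$ then $e^{\xi'/r}\ch_G(E)=e^{\gamma}(r,\xi,\tfrac12\xi^2-n)=(r,\xi',\tfrac12\xi'^2-n')$ with $n'=n+(r-1)\gamma\xi+\tfrac12 r(r-1)\gamma^2$. Hence $M_{Y,\xi'/r}(r,\xi',n')=M_{Y,\xi/r}(r,\xi,n)$ as \emph{the same scheme} with the same obstruction theory, and one checks $\vd(r,\xi',n')=\vd(r,\xi,n)$, so the generating function is unchanged. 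No algebraicity of $\gamma$ is needed.
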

\begin{proof}
Equation \eqref{evirgenfun} follows from \cite[Prop.~3.3]{JTh}. This can be seen as follows. Denote by $y$ the weight one representation of the trivial $\C^*$-action on $M:=M_{Y,\xi/r}(r,\xi,n)$ and let $t:=c_1^{\C^*}(y)$. Since $F^\mdot \cong E^\mdot \oplus (E^\mdot)^\vee \otimes y^{-1} [1]$ ($\C^*$-equivariantly), we have
$$
\frac{1}{e(N^{\vir})} = e(E^\mdot \otimes y) = t^{\vd(M)} c_{-\frac{1}{t}}((E^\mdot)^{\vee}),
$$
where $c_{x}(\cdot) = 1 + x c_1(\cdot) + \cdots$ and $(E^\mdot)^{\vee} = T_M^{\vir}$ is the virtual tangent bundle of $M$.

Suppose $p : Y \rightarrow S$, $p' : Y' \rightarrow S$ are two $\PP^{r-1}$-bundles such that $w(Y) = w(Y') = w$.  
Consider the projections $p_1 : Y' \times_S Y \rightarrow Y'$ and $p_2 : Y' \times_S Y \rightarrow Y$. By \cite[Lem.~1.7]{Yos4}, there exists a line bundle $L$ on $Y' \times_S Y$ such that
$$
\Xi : \Coh(S,Y) \stackrel{\sim}{\rightarrow} \Coh(S,Y'), \quad E \mapsto p_{1*}(p_2^* E \otimes L) 
$$ 
is an equivalence of categories.\footnote{$L$ is of the form (suppressing pull-backs) $L(p^{\prime *} \alpha^{-1}) \otimes L(p^* \alpha^{-1})^{\vee} \otimes P$ with $P \in \Pic(S)$.} Let $G$, $G'$ be locally free sheaves corresponding to the non-trivial extensions $T_{Y/S}$ by $\O_{Y}$ and $T_{Y'/S}$ by $\O_{Y'}$ respectively. Since $\Xi(G)$ is a rank $r$ torsion free $Y'$-sheaf, we have 
$$
\Xi(G) = G' + T \in K(S,Y'),
$$
where $T$ is a $Y'$-sheaf of dimension zero (i.e.~$\dim(p_*T)=0$). As in \cite[Lem.~3.5]{Yos4}, we therefore find
$$
\ch_{G}(E) = \ch_{\Xi(G)}(\Xi(E)) = \ch_{G'}(\Xi(E))
$$
and $\Xi$ induces an isomorphism of moduli spaces $M_{Y,\xi/r}(r,\xi,n) \cong M_{Y',\xi/r}(r,\xi,n)$. 

Finally let $p : Y \rightarrow S$ be a $\PP^{r-1}$-bundle satisfying $w(Y)=w$ and let $\xi, \xi' \in H^2(S,\Z)$ be two lifts of $w$. Then $\xi' = \xi+r\gamma$ for some $\gamma \in H^2(S,\Z)$. A $Y$-sheaf $E$ satisfies
$$
e^{\frac{\xi'}{r}} \ch_G(E) = (r,\xi',\tfrac{1}{2} \xi^{\prime 2} - n')
$$
if and only if
$$
e^{\frac{\xi}{r}} \ch_G(E) = (r,\xi,\tfrac{1}{2} \xi^{2} - n), \quad n' =n + (r-1) \gamma \xi +\tfrac{1}{2} r (r-1) \gamma^2.
$$
In particular, $M_{Y,\xi'/r}(r,\xi',n') = M_{Y,\xi/r}(r,\xi,n)$ and $\vd(r,\xi',n') = \vd(r,\xi,n)$. Consequently the right hand side of \eqref{evirgenfun} is independent of the choice of lift $\xi$ of $w$.
\end{proof}

\begin{remark}
We defined the generating function by fixing a polarization $H$ in advance. However, for fixed $r,\xi$, it could be convenient to choose a different polarization for each $n$, e.g.~(as mentioned above) a generic polarization with respect to $(r,\xi,n)$.
\end{remark}

\begin{remark}
For surfaces satisfying $p_g(S)>0$, conjecturally the generating functions $\sfZ_{c_1}^{\SU(r)}(q)$ and $\sfZ_{c_1}^{\SU(r) / \Z_r}(q)$ are independent of $H$. On the $\SU(r)$ side, for  $r$ prime and in the case there are no strictly semistable objects, this independence follows from Mochizuki's formula \cite{Moc, GK1} and Laarakker's formula \cite{Laa1}. 
On the $\SU(r) / \Z_r$ side, for $w \in H^2(S,\mu_r)$ with non-trivial Brauer class, independence of $\sfZ_{w}(q)$ of polarization follows from the proof of Theorem \ref{conethm}, where we saw that any rank $r$ torsion free $Y$-sheaf has no non-trivial saturated $Y$-subsheaves and is therefore automatically twisted stable with respect to any polarization $H$.
\end{remark}

\subsection{Joyce-Song pairs} \label{sec:JS}

This section is a review of Tanaka-Thomas's handling of strictly semistable objects on the $\SU(r)$-side. On the $\SU(r) / \Z_r$-side, for $r$ prime, strictly semistable objects only occur for the contribution of $w \in H^2(S,\mu_r)$ with trivial Brauer class. Moreover, for generic polarizations, strictly semistable objects only occur for $w=0$. 

Let $(S,H)$ be a smooth polarized surface satisfying $H_1(S,\Z) = 0$, $p_g(S)>0$, and $r \in \Z_{>0}$ (not necessarily prime).
Fix a Chern character
$$
\ch := (r,c_1,\tfrac{1}{2} c_1^2 - n).
$$
For any integer $\nu \gg 0$, consider the moduli space of stable Joyce-Song Higgs pairs
$$
P_{S,\nu}^H(r,c_1,n) := \Big\{[(E,\phi,s)] \, : \, \rk(E) = r, \, c_1(E) = c_1, \, c_2(E) = n, \, \tr \phi = 0, \, 0 \neq s \in H^0(E(\nu H))  \Big\},
$$ 
where a triple $(E,\phi,s)$ is called stable if and only if $(E,\phi)$ is a semistable Higgs pair and if $s$ factors through a $\phi$-invariant subsheaf $0 \neq F \subsetneq E$, then we have 
$
p_F(m) < p_E(m),
$
where $p_F(m), p_E(m)$ are the reduced Hilbert polynomials of $F,E$. There is no notion of strictly semistable Joyce-Song Higgs pair. The integer $\nu \gg 0$ can be taken such that $H^{>0}(S,E(\nu H)) = 0$ for any $[(E,\phi,s)] \in P:=P_{S,\nu}^H(r,c_1,n)$. In particular, $\dim H^0(S,E(\nu H)) = \chi(\ch(\nu H)) - 1$, where
$$
\chi(\ch(\nu H)) := \int_S \ch \cdot e^{\nu H} \cdot \td_S.
$$

The moduli space $P$ is a quasi-projective scheme. It can be seen as a moduli space of complexes $I^\mdot = \{\O_X(-\nu \pi^* H) \rightarrow \cE \}$, where $\cE$ is a 2-dimensional sheaf with proper support on $X = \mathrm{Tot}_S(K_S)$ and $\pi : X \to S$ denotes the projection. Following the work of D.~Joyce and Y.~Song \cite{JS}, this is used in  \cite{TT2} to show that $P$ has a symmetric perfect obstruction theory. Scaling the Higgs field gives a $\C^*$-action on $P$ with projective fixed locus and, as before, one can consider invariants
$$
\int_{[P^{\C^*}]^{\vir}} \frac{1}{e(N^{\vir})}, 
$$
which now depends on the choice of $\nu \gg 0$. 

For any Chern character $\ch = (r,c_1,\tfrac{1}{2} c_1^2 - n)$ with $r>0$, we write $p_{\ch}(m) = \chi(\ch(m H)) / r$ for the reduced Hilbert polynomial determined by $\ch$ with respect to the polarization $H$.
\begin{definition} \label{TTgeneral}
 A polarization $H$ on $P:=P_{S,\nu}^H(r,c_1,n)$ is called generic in the sense of \cite[Eqn.~(2.4)]{TT2} when for any Chern character $\ch':=(r',c_1', \tfrac{1}{2} c_1^{\prime 2} - n')$ with $r'>0$ we have
$$
p_{\ch}(m) = p_{\ch'}(m) \Longrightarrow  (r,c_1,n) = d \cdot (r',c_1',n')
$$
for some $d \in \Z_{\geq 1}$.
\end{definition}

The following was conjectured in \cite{TT2} and proved on the vertical component by T.~Laarakker \cite{Laa2}.
\begin{conjecture}[Tanaka-Thomas] \label{conj2} 
Let $H$ be generic in the sense of \cite[Eqn.~(2.4)]{TT2} and let $P:=P_{S,\nu}^H(r,c_1,n)$. Define $\VW_S^{H}(r,c_1,n)$ by the equation
$$
\int_{[P^{\C^*}]^{\vir}} \frac{1}{e(N^{\vir})} = (-1)^{\chi(\ch(\nu H))-1} \chi(\ch(\nu H)) \cdot  \VW_S^{H}(r,c_1,n).
$$
Then $\VW_S^{H}(r,c_1,n)$ is independent of the choice of $\nu \gg 0$. 
\end{conjecture}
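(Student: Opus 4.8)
This is a wall-crossing invariance statement: the raw integral $\int_{[P^{\C^*}]^{\vir}}e(N^{\vir})^{-1}$ genuinely varies with $\nu$ through the section count $\chi(\ch(\nu H))$, and the claim is that after removing this dependence by the displayed normalization what survives stabilizes as $\nu\to\infty$. I would prove it exactly as Joyce--Song treat their stable pairs \cite{JS}, but run the entire argument $\C^*$-equivariantly. The point is that, although for $p_g(S)>0$ one cannot pass to Behrend-function weighted Euler characteristics (Maulik--Thomas \cite{MT} show ``virtual $\ne$ motivic'' away from $K3$ surfaces), all of the $\C^*$-\emph{fixed} loci that enter --- of semistable Higgs pairs, of Joyce--Song Higgs pairs, and of the iterated extensions measuring the failure of stability --- are projective Deligne--Mumford stacks carrying symmetric $\C^*$-fixed perfect obstruction theories, so the localized virtual class makes intersection theory on them legitimate. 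Hence the whole Joyce--Song package (stack functions, the Hall-algebra logarithm defining generalized invariants, and the pair/no-pair comparison) can be transcribed to these fixed loci, and everything it produces is automatically $\nu$-free.

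Concretely, I would proceed in three steps. First, define a generalized invariant $\bar{\mathsf N}(r,c_1,n)$ as the Joyce--Song logarithm, formed in the virtual Hall algebra of $\C^*$-fixed semistable Higgs pairs of class $(r,c_1,n)$; since $H$ is generic in the sense of Definition \ref{TTgeneral}, the only Harder--Narasimhan-type decompositions all of whose pieces have reduced Hilbert polynomial $p_{(r,c_1,n)}$ are those with pieces proportional to $(r,c_1,n)$, which collapses the combinatorics to sums over partitions of $(r,c_1,n)$ into proportional classes. Second, establish the pair wall-crossing identity: via the Hall-algebra wall-crossing of \cite{JS} (or a master-space argument across the wall for stability of the tautological section), $\C^*$-localize to obtain
\begin{equation*}
\int_{[P^{\C^*}_{S,\nu}(r,c_1,n)]^{\vir}}\frac{1}{e(N^{\vir})}
= \sum_{\substack{l\ge 1,\ (\alpha_1,\dots,\alpha_l)\\ \alpha_1+\cdots+\alpha_l=(r,c_1,n),\ p_{\alpha_i}=p_{(r,c_1,n)}}}
\frac{(-1)^l}{l!}\,U\big((\alpha_i);\nu\big)\,\prod_{i=1}^{l}\bar{\mathsf N}(\alpha_i),
\end{equation*}
where the coefficient $U$ depends on $\nu$ only through the integers $N_{\nu,\alpha_i}:=\chi(\ch(\alpha_i)(\nu H))$ and the $l=1$ term is exactly $(-1)^{N_{\nu,(r,c_1,n)}-1}N_{\nu,(r,c_1,n)}\,\bar{\mathsf N}(r,c_1,n)$. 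Third, invert: dividing by $(-1)^{N_{\nu,(r,c_1,n)}-1}N_{\nu,(r,c_1,n)}$ and inducting on $n$ (so that the $\bar{\mathsf N}(\alpha_i)$ attached to strictly smaller classes are already known to be $\nu$-independent), the residual $\nu$-dependence coming from the $l\ge 2$ terms cancels against the denominator by the same arithmetic identity among the $N_{\nu,\alpha_i}$ that Joyce--Song isolate; one concludes $\VW_S^H(r,c_1,n)=\bar{\mathsf N}(r,c_1,n)$, which is manifestly $\nu$-free.

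For the vertical (monopole) contributions to $P^{\C^*}$ this program has already been carried out by Laarakker \cite{Laa2}; so the outstanding piece is the instanton part, where the relevant $\C^*$-fixed loci are built from the Gieseker--Maruyama moduli $M_S^H(r,c_1,n)$ of stable sheaves on $S$, and one would feed Mochizuki-type wall-crossing for sheaves on the surface into Step~2. The main obstacle is precisely Steps~1 and 2 in the $p_g(S)>0$ regime: the classical Behrend-function route of \cite{JS} is unavailable, so every Euler-characteristic-weighted count must be upgraded to a $\C^*$-localized virtual integral, and one must re-prove, in that equivariant setting, the ``no poles'' statements guaranteeing that the Hall-algebra logarithm is well defined and that the pair formula has the stated polynomial dependence on the $N_\nu$'s. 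A secondary technical point is checking that the genericity of Definition \ref{TTgeneral} really annihilates every bad Harder--Narasimhan stratum; on the instanton side this is the usual boundedness bookkeeping for walls in the ample cone, while on the monopole side it is subsumed by Laarakker's analysis.
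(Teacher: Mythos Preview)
The paper does not prove this statement: it is explicitly labeled a \emph{conjecture} (Conjecture~\ref{conj2}), attributed to Tanaka--Thomas \cite{TT2}, and the only partial result mentioned is Laarakker's proof for the vertical component \cite{Laa2}. There is therefore no ``paper's own proof'' to compare against.

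Your proposal is not a proof but a research outline, and you are candid about this: you correctly flag that the main obstacle is establishing Steps~1 and~2 in the $p_g(S)>0$ regime, where the Behrend-function machinery of \cite{JS} is unavailable and must be replaced by a genuinely virtual Hall-algebra formalism on $\C^*$-fixed loci, together with the requisite ``no poles'' statements. This is exactly the open problem; your sketch does not resolve it, and the identity you write down in Step~2 presupposes precisely the structure whose existence is in question. The approach you describe is the natural one and matches what experts expect a proof should look like, but the substantive content --- constructing the virtual Hall algebra, proving the logarithm is well-defined, and establishing the pair wall-crossing formula with the correct polynomial dependence on the $N_{\nu,\alpha_i}$ --- remains to be done on the instanton side. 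In short: reasonable strategy, but still a conjecture.
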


When there are no rank $r$ strictly semistable Higgs pairs $(E,\phi)$ on $S$ with $c_1(E) = c_1$ and $c_2(E) = n$ we have \cite[Thm.~1.5]{TT2}
$$
\VW_S^{H}(r,c_1,n) = \int_{[N^{\C^*}]^{\vir}} \frac{1}{e(N^{\vir})},
$$ 
where $N:=N_S^H(r,c_1,n)$. In the definition of the $\SU(r)$ partition function, whenever there are strictly semistable objects for $(r,c_1,n)$, the corresponding integral in the partition function is replaced by $\VW_S^{H}(r,c_1,n)$. 

Let $r$ be prime and $w \in H^2(S,\mu_r)$ with trivial Brauer class. Let $\xi \in H^2(S,\Z)$ algebraic such that $[\xi] = w$. In the definition of the contribution of $\sfZ_{w}(q)$ to the $\SU(r)/\Z_r$ partition function, whenever there are strictly semistable objects for $(r,\xi,n)$, the corresponding integral in $\sfZ_{w}(q)$ is replaced by $\VW_S^{H}(r,\xi,n)$.

\subsection{Partition functions for $K3$ surfaces} \label{sec:pfmainthm}

We turn to the proof of Theorem \ref{mainthm}. Let $S$ be a $K3$ surface and denote by $\Hilb^n(S)$ the Hilbert scheme of $n$ points on $S$. 
The following result, due to G\"ottsche \cite{Got1}, determines their topological Euler characteristics
\begin{align} 
\begin{split} \label{Hilbformulae}
\sum_{n=0}^{\infty} e(\Hilb^n(S)) \, q^{n-1} &= \Delta(q)^{-1}. 
\end{split}
\end{align}
The following lemma will be useful in this section.
\begin{lemma} \label{lem}
Let $\psi(x) = \sum_{n=0}^{\infty} \psi_n x^n \in \C[[x]]$ be a formal power series.
Then for any prime number $r$ and $k \in \Z$, we have
$$
\sum_{n \equiv k \mod r} \psi_n x^n = \frac{1}{r} \sum_{j=0}^{r-1} e^{-\frac{2 \pi i jk}{r}} \psi(e^{\frac{2 \pi i j}{r}} x),
$$
where the sum is over all $n \in \Z_{\geq 0}$ satisfying $n \equiv k \mod r$ and $i := \sqrt{-1}$.
\end{lemma}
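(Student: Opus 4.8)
The plan is to recognize this as the standard roots-of-unity filter (multisection of a power series) and to prove it by orthogonality of characters of $\Z/r\Z$, working entirely in the ring $\C[[x]]$ so that no convergence issues arise. Write $\zeta := e^{2\pi i / r}$, a primitive $r$-th root of unity, so that the claimed identity reads $\sum_{n \equiv k \bmod r} \psi_n x^n = \tfrac1r \sum_{j=0}^{r-1} \zeta^{-jk} \psi(\zeta^j x)$.

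First I would substitute $x \mapsto \zeta^j x$ into the definition of $\psi$, obtaining $\psi(\zeta^j x) = \sum_{n=0}^\infty \psi_n \zeta^{jn} x^n$ in $\C[[x]]$. Then I would form the finite linear combination $\tfrac1r \sum_{j=0}^{r-1} \zeta^{-jk}\psi(\zeta^j x)$ and interchange the (finite) sum over $j$ with the formal sum over $n$ — this is legitimate term by term in $\C[[x]]$ since only finitely many $j$ are involved — to get
$$
\frac1r \sum_{j=0}^{r-1} \zeta^{-jk}\psi(\zeta^j x) = \sum_{n=0}^\infty \psi_n x^n \left( \frac1r \sum_{j=0}^{r-1} \zeta^{j(n-k)} \right).
$$
The next step is to evaluate the inner character sum $\tfrac1r\sum_{j=0}^{r-1}\zeta^{j(n-k)}$: if $r \mid n-k$ then $\zeta^{n-k}=1$ and the sum is $\tfrac1r \cdot r = 1$; otherwise $\zeta^{n-k}\neq 1$ and the geometric series gives $\tfrac1r\cdot\frac{\zeta^{r(n-k)}-1}{\zeta^{n-k}-1}=0$ since $\zeta^{r(n-k)}=1$. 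Substituting back, only the terms with $n\equiv k \bmod r$ survive, each with coefficient $\psi_n$, which is exactly the left-hand side.

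There is essentially no serious obstacle here; the one point requiring a word of care is simply that the manipulation above takes place in the formal power series ring $\C[[x]]$, so the interchange of summations is valid because the sum over $j$ is finite. I would also remark that the hypothesis that $r$ is prime is not actually needed — the identity holds verbatim for every integer $r \geq 1$ — but since the lemma is only applied for prime $r$ elsewhere in the paper, there is no harm in stating it under that assumption.
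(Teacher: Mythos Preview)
Your proof is correct and is essentially identical to the paper's own argument: both expand $\psi(\zeta^j x)$ term by term, interchange the finite sum over $j$ with the formal sum over $n$, and then invoke the orthogonality relation $\tfrac{1}{r}\sum_{j=0}^{r-1}\zeta^{j(n-k)} = \delta_{n\equiv k \bmod r}$. Your remark that primality of $r$ is unnecessary is also correct.
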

\begin{proof}
This follows from 
$$
\frac{1}{r} \sum_{j=0}^{r-1} e^{-\frac{2 \pi i jk}{r}} \psi(e^{\frac{2 \pi i j}{r}} x) = \sum_{n=0}^{\infty}  \Big[  \frac{1}{r} \sum_{j=0}^{r-1} e^{\frac{2 \pi i}{r} j(n-k)} \Big] \psi_n x^n,
$$
where the sum between brackets is 1 if and only if $n \equiv k \mod r$ and 0 otherwise.
\end{proof}

Let $S$ be a $K3$ surface, $r$ prime, and $c_1 \in H^2(S,\Z)$ algebraic. The $\SU(r)$ partition function was determined by Tanaka-Thomas 
\begin{equation} \label{SUK3}
\sfZ_{c_1}^{\SU(r)}(q) = \frac{\delta_{c_1,0}}{r^3} \, \Delta(q^r)^{-1} + \frac{1}{r^2} \sum_{j=0}^{r-1} e^{-\frac{\pi i j}{r} c_1^2} \,  \Delta(e^{\frac{2 \pi i j}{r}} q^{\frac{1}{r}})^{-1},
\end{equation}
where the polarizations are chosen generic in the sense we now explain. Note that \eqref{SUK3} satisfies Conjecture \ref{conj1}.

For $\gcd(r,c_1)=1$, for each $n$ we choose $H$ generic with respect to $(r,c_1,n)$, meaning that $H$ does not lie on a wall determined by $(r,c_1,n)$. Then there are no rank $r$ strictly semistable Higgs pairs $(E,\phi)$ on $S$ with $c_1(E) = c_1$ and $c_2(E)=n$. By Thomas's cosection theorem \cite[Thm.~5.34]{Tho}, the only component of $N_S^H(r,c_1,n)^{\C^*}$ contributing to the Vafa-Witten invariant is $M_S^H(r,c_1,n)$. Since $M_S^H(r,c_1,n)$ is smooth of expected dimension, its contribution to the Vafa-Witten invariant is \cite[Sect.~7.1]{TT1}
$$
e^{\vir}(M_S^H(r,c_1,n)) = e(M_S^H(r,c_1,n)).
$$ 
By \cite{OG,Huy,Yos3}, $M_S^H(r,c_1,n)$ is deformation equivalent to $\Hilb^{\vd(r,c_1,n)/2}(S)$, where $\vd(r,c_1,n)$ is given by \eqref{vd}. Hence \eqref{SUK3} follows at once from \eqref{Hilbformulae} combined with Lemma \ref{lem}. The case $\gcd(r,c_1)=r$ is much harder. Choosing $H$ generic in the sense of \cite[Eqn.~2.4]{TT2} (Definition \ref{TTgeneral}), \eqref{SUK3} was proved in \cite{TT2} by comparing virtual and motivic invariants and using a multiple cover formula established in \cite{MT}.

Let $w \in H^2(S,\mu_r)$. Suppose $w$ has trivial Brauer class. Let $\xi \in H^2(S,\Z)$ be algebraic such that $[\xi] = w$. Then (by definition) $\mathsf{Z}_{w}(q) = r \sfZ_{\xi}^{\SU(r)}(q)$, which is given (for generic polarizations as above) by \eqref{SUK3}. Clearly $\mathsf{Z}_{w}(q)$ is independent of the choice of $\xi \in H^2(S,\Z)$ algebraic such that $[\xi] = w$. The proof of Theorem \ref{mainthm} follows from the following result, where $H$ can be \emph{any} polarization.

\begin{proposition}
For any polarized $K3$ surface $(S,H)$, prime number $r$, and $w \in H^2(S,\mu_r)$ with non-trivial Brauer class, we have
\begin{align*}
\sfZ_{w}(q) =  \frac{1}{r} \sum_{j=0}^{r-1} e^{-\frac{\pi i j}{r} w^2} \,  \Delta(e^{\frac{2 \pi i j}{r}} q^{\frac{1}{r}})^{-1}.
\end{align*}
\end{proposition}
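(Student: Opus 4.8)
The plan is to evaluate the right-hand side of \eqref{evirgenfun} in Proposition \ref{indep} directly, using the known geometry of moduli of twisted sheaves on a $K3$ surface. Since $\chi(\O_S)=2$ and $K_S=0$, the prefactor is $q^{-1/r}$ and $\vd(r,\xi,n)=2rn-(r-1)\xi^2-2(r^2-1)$, so
\[
\sfZ_{w}(q) \;=\; q^{-\frac 1r}\sum_{n\in\Z} q^{\frac{1}{2r}\vd(r,\xi,n)}\, e^{\vir}\big(M_{Y,\xi/r}(r,\xi,n)\big).
\]
First I would record that $M:=M_{Y,\xi/r}(r,\xi,n)$ is smooth of expected dimension: a twisted stable $Y$-sheaf $E$ is simple, so by the Serre duality computation in the proof of Proposition \ref{potM} (with $K_S=0$) the obstruction space $\Ext^2_Y(E,E)_0\cong\Hom_Y(E,E)_0^\vee$ vanishes; hence the perfect obstruction theory of Proposition \ref{potM} is that of a smooth scheme of expected dimension and $e^{\vir}(M)=e(M)$. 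Note that, by the proof of Theorem \ref{conethm}, every rank $r$ torsion free $Y$-sheaf is twisted stable for any $H$, so there are no semistability issues and $M$ is projective.

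Next I would identify $M$ with a Hilbert scheme up to deformation. Writing $v:=e^{\xi/r}\ch_G(E)\cdot\sqrt{\td_S}$ for the Mukai vector on $S$ determined by the twisted Chern data \eqref{fixchB}, a direct computation gives $\langle v,v\rangle = \vd(r,\xi,n)-2$, and $v$ is primitive because $w\neq 0$ together with $r$ prime forces $\gcd(r,\xi)=1$. By Yoshioka's structural results for moduli of twisted sheaves on $K3$ surfaces \cite{Yos4} (extending \cite{OG,Huy,Yos3} in the untwisted case), $M$ is empty when $\vd(r,\xi,n)<0$ and is otherwise deformation equivalent to $\Hilb^{\vd(r,\xi,n)/2}(S)$; in particular $e(M)=e\big(\Hilb^{\vd(r,\xi,n)/2}(S)\big)$ in all cases (with the convention $e(\Hilb^m(S))=0$ for $m<0$). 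Substituting $m:=\vd(r,\xi,n)/2 = rn-c$ with $c:=\tfrac{(r-1)\xi^2}{2}+r^2-1\in\Z$ (integral since the $K3$ intersection form is even), as $n$ runs over $\Z$ the integer $m$ runs exactly over the residue class $m\equiv -c\pmod r$, so
\[
\sfZ_{w}(q) \;=\; q^{-\frac1r}\sum_{\substack{m\ge 0\\ m\equiv -c\ (r)}} q^{m/r}\, e\big(\Hilb^m(S)\big)\;=\;\sum_{\substack{m\ge 0\\ m\equiv -c\ (r)}} e\big(\Hilb^m(S)\big)\, q^{(m-1)/r}.
\]

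Finally I would apply Lemma \ref{lem} to the power series $\psi(x):=\sum_{m\ge 0} e(\Hilb^m(S))\,x^m = x\,\Delta(x)^{-1}$, which is G\"ottsche's formula \eqref{Hilbformulae}, with $k=-c$ and $x=q^{1/r}$, obtaining
\[
\sfZ_{w}(q) \;=\; \frac1r\sum_{j=0}^{r-1} e^{\frac{2\pi i j(c+1)}{r}}\,\Delta\big(e^{\frac{2\pi i j}{r}} q^{1/r}\big)^{-1}.
\]
It then remains to match the phases: since $2(c+1)+\xi^2 = r(\xi^2+2r)$ and $\xi^2$ is even on a $K3$ surface, one has $e^{\frac{2\pi i j(c+1)}{r}} = e^{-\frac{\pi i j\,\xi^2}{r}} = e^{-\frac{\pi i j\,w^2}{r}}$ for every $j$, which gives the claimed formula. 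The substantive input here is Yoshioka's identification of moduli of twisted sheaves on $K3$ surfaces with Hilbert schemes of points up to deformation equivalence; everything else is bookkeeping of exponents via Lemma \ref{lem}. The place I expect to require the most care is verifying that the relevant Mukai vector is primitive with $\langle v,v\rangle = \vd(r,\xi,n)-2$ and checking the congruence $2(c+1)+\xi^2\equiv 0\pmod{2r}$, so that the final phase $e^{-\pi i j w^2/r}$ comes out correctly and is manifestly independent of the chosen lift $\xi$ of $w$.
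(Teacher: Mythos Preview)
Your proof is correct and follows essentially the same approach as the paper: smoothness of $M_{Y,\xi/r}(r,\xi,n)$ via vanishing of $\Ext^2_Y(E,E)_0$, Yoshioka's deformation equivalence to $\Hilb^{\vd/2}(S)$, and then Lemma \ref{lem} applied to G\"ottsche's formula \eqref{Hilbformulae} to extract the correct residue class and phase. Your write-up is in fact slightly more explicit than the paper's in checking the congruence $2(c+1)+\xi^2\equiv 0 \pmod{2r}$ and in noting primitivity of the Mukai vector, but the substance is the same.
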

\begin{proof}
Let $p : Y \rightarrow S$ be a $\PP^{r-1}$-bundle such that $w(Y) = w$ and $\xi \in H^2(S,\Z)$ such that $[\xi] = w$. By Theorem \ref{conethm}, $M_{Y,\xi/r}(r,\xi,n)$ contains all isomorphism classes of torsion free $Y$-sheaves $E$ satisfying $e^{\xi/r}\ch_G(E) = (r,\xi,\tfrac{1}{2} \xi^2-n)$ and twisted stability is automatically satisfied. Since $S$ is a $K3$ surface, we have $\Ext^2_Y(E,E)_0 \cong \Hom_Y(E,E)_0^* = 0$ (stable sheaves are simple). Therefore $M_{Y,\xi/r}(r,\xi,n)$ is smooth of expected dimension $\vd(r,\xi,n) = 2rn - (r-1)\xi^2 - 2(r^2-1)$. Moreover, in \cite[Thm.~3.16]{Yos4} Yoshioka proved that $M_{Y,\xi/r}(r,\xi,n)$ is deformation equivalent to  
$
\Hilb^{\vd(r,\xi,n)/2}(S).
$
(Recall that $\beta^2 \in 2 \Z$ for any $\beta  \in H^2(S,\Z)$, so indeed $\vd(r,\xi,n) \in 2\Z$.) From Proposition \ref{indep}, we deduce
\begin{align*}
\mathsf{Z}_{w}(q) &= q^{-\frac{1}{r} } \sum_{n \in \Z} q^{\frac{1}{2r} \vd(r,\xi,n)} e(M_{Y,\xi/r}(r,\xi,n)) \\
&= q^{-\frac{1}{r} } \sum_{n \in \Z} q^{\frac{1}{2r} \vd(r,\xi,n)} e(\Hilb^{\vd(r,\xi,n)/2}(S)) \\
&=  q^{-\frac{1}{r}} \cdot \frac{1}{r} \sum_{j=0}^{r-1} e^{\frac{2 \pi i j}{r} \{\frac{1}{2}(r-1)\xi^2  + r^2-1\}} \Big\{\sum_{n \in \Z} e(\Hilb^n(S)) \, Q^n \Big\} \Big|_{Q = e^{\frac{2 \pi i j}{r}} q^{\frac{1}{r}}},
\end{align*}
where we used Lemma \ref{lem} for the last equality. The result follows from \eqref{Hilbformulae}.
 \end{proof}

\subsection{$S$-duality for $K3$ surfaces} \label{sec:Sdual}

We now prove the $S$-duality conjecture (Conjecture \ref{Sdualconj}) for any $K3$ surface $S$, prime rank $r$, generic polarizations, and $c_1 \in H^2(S,\Z)$ algebraic.

\begin{proof}[Proof of Theorem \ref{maincor}]
Recall that $\Delta(q)$ given in \eqref{prep} is the Fourier expansion in $q = e^{2 \pi i \tau}$ of the discriminant modular form $\Delta(\tau)$ on $\mathfrak{H}$, which has weight 12.  For $K3$ surfaces, the $S$-duality transformation states
\begin{equation} \label{K3Sdual}
\sfZ^{\SU(r)}_{c_1}(-1/\tau) = r^{-11} \, \tau^{-12} \,  \sfZ^{\SU(r) / \Z_r}_{c_1}(\tau),
\end{equation}
where $\sfZ^{\SU(r)}_{c_1}(\tau)$ and $\sfZ^{\SU(r) / \Z_r}_{c_1}(\tau)$ stand for the meromorphic functions determined respectively by \eqref{SUK3} and Theorem \ref{mainthm}. 

We require the following two identities for the $K3$ lattice known as flux sums in physics \cite{VW, LL}. For any $j \in \{1, \ldots, r-1\}$, we define $n_j \in \{1, \ldots, r-1\}$ by the equation $jn_j \equiv -1 \mod r$. Then
\begin{align*}
\sum_{w \in H^2(S,\mu_r)} e^{\frac{2 \pi i}{r} (wc_1)} = r^{22} \delta_{c_1,0}, \quad \sum_{w \in H^2(S,\mu_r)} e^{\frac{2 \pi i}{r} (wc_1)} e^{-\frac{\pi i j}{r} w^2} = r^{11} e^{-\frac{\pi i n_j}{r} c_1^2}.
\end{align*}
Proofs of these identities can be found e.g.~in \cite[Sect.~4.2]{GK3}. Combining with 
\begin{align*}
\Delta(r\tau)|_{-\frac{1}{\tau}} &= \big( \tfrac{\tau}{r} \big)^{12} \,  \Delta\big(\tfrac{\tau}{r} \big), \\
\Delta\big( \tfrac{\tau+j}{r} \big)\big|_{-\frac{1}{\tau}} &= \tau^{12} \Delta\big( \tfrac{\tau+n_j}{r} \big), \quad \forall j=1,\ldots, r-1,
\end{align*}
transformation \eqref{K3Sdual} follows.
\end{proof}

\end{document}